\newtheorem{theorem}{\bf Theorem}
\newtheorem{definition}[theorem]{\bf Definition}
\newtheorem{lemma}[theorem]{\bf Lemma}
\newtheorem{corollary}[theorem]{\bf Corollary}
\newtheorem{remark}[theorem]{\bf Remark}
\newtheorem{example}[theorem]{\bf Example}
\DeclareMathOperator{\id}{id}
\DeclareMathOperator{\supp}{supp}
\DeclareMathOperator{\Hom}{Hom}
\DeclareMathOperator{\im}{im}
\newcommand{\R}{\ensuremath{R}}
\newcommand{\Int}{\ensuremath{{\textstyle\int}}}
\newcommand{\Der}{\ensuremath{\partial}}
\title{Formal proofs of operator identities by a single formal computation}
\author{Clemens G. Raab, Georg Regensburger, and Jamal Hossein Poor}
\date{\small{Inst.\ f.\ Algebra, Johannes Kepler University, Linz, Austria}}
\begin{document}
\maketitle

\begin{abstract}
A formal computation proving a new operator identity from known ones is, in principle, restricted by domains and codomains of linear operators involved, since not any two operators can be added or composed.
Algebraically, identities can be modelled by noncommutative polynomials and such a formal computation proves that the polynomial corresponding to the new identity lies in the ideal generated by the polynomials corresponding to the known identities.
In order to prove an operator identity, however, just proving membership of the polynomial in the ideal is not enough, since the ring of noncommutative polynomials ignores domains and codomains.
We show that it suffices to additionally verify compatibility of this polynomial and of the generators of the ideal with the labelled quiver that encodes which polynomials can be realized as linear operators.
Then, for every consistent representation of such a quiver in a linear category, there exists a computation in the category that proves the corresponding instance of the identity.
Moreover, by assigning the same label to several edges of the quiver, the algebraic framework developed allows to model different versions of an operator by the same indeterminate in the noncommutative polynomials.
\end{abstract}

\section{Introduction}

Very often, properties of matrices or linear operators can be characterized by identities they satisfy. Proofs of new identities corresponding to a claim often are done by formal computations using known identities corresponding to the assumptions. For ensuring validity of such formal computations, one has to check in every step that all expressions are compatible with the formats of the corresponding matrices (resp.\ with the domains and codomains of corresponding linear operators). This means that the sums and products occurring can indeed be formed with matrices (resp.\ linear operators).
In short, the main result of this paper is that, instead of checking every single step of the formal computation, it suffices to check compatibility of the expressions in the new identities to be proven. In particular, if during the computation compatibility was ignored, the algebraic framework we develop ensures the existence of a (possibly different) computation that only uses compatible expressions.
Moreover, an advantage of this framework is that a single formal computation implies the claim for all situations in which the assumptions and claim can be formulated (e.g.\ matrices of different sizes, bounded linear operators on Hilbert spaces, homomorphisms of modules, etc.).

It is standard in algebra to treat identities via polynomial ideals. However, in the context of operators mapping between different spaces, the justification of this approach is more subtle. In the following, we illustrate this by the straightforward proof of a parametrization of inner inverses.
In general, the situation will be more involved than in this illustrative example, of course.
After that, we give a general explanation of our main result. A more formal summary of the notions and results developed in this paper can be found in Section~\ref{sec:summary} below.

An operator $A^-$ is called inner inverse (or g-inverse or $\{1\}$-inverse) of an operator $A$, if $AA^-A=A$. 
Assume, given a linear operator $A$ and an inner inverse $A^-$, we want to prove that also
\[
 A^-+Y-YAA^-
\]
is an inner inverse of $A$ for any linear operator $Y$. Indeed, we can compute
\begin{align*}
 A(A^-+Y-YAA^-)A &= AA^-A+AYA-AYAA^-A\\
 &= A+AYA-AYA = A,
\end{align*}
where the first equality follows from linearity and the second rewrites $AA^-A=A$. Evidently, this computation is valid for linear operators $A,A^-,Y$ on a single space $V$. However, one can check that the same computation also makes sense for linear operators $A:V\to{W}$, $A^-:W\to{V}$, and $Y:W\to{V}$ between different spaces, for example. These situations correspond to the following diagrams.
\begin{center}
\begin{tikzpicture}
\begin{scope}
 \node (v) {$V$};
 \path[->] (v) edge [loop left] node [auto] {$A$} (v) edge [loop right] node [auto] {$A^-$} (v) edge [loop below] node [auto] {$Y$} (v);
\end{scope}
\begin{scope}[xshift=5cm]
 \matrix (m) [matrix of math nodes, column sep=3cm]
  {V & W \\};
 \path[->] (m-1-1) edge [bend left] node [auto] {$A$} (m-1-2);
 \path[->] (m-1-2) edge [bend left] node [auto] {$A^-$} (m-1-1);
 \path[->] (m-1-2) edge [bend left=15] node [auto, swap] {$Y$} (m-1-1);
\end{scope}
\end{tikzpicture}
\end{center}

For emphasizing the algebraic aspects of this computation, we write noncommutative indeterminates $a,a^-,y$ for the operators $A,A^-,Y$. Thereby, we translate expressions of operators to noncommutative polynomials in $a,a^-,y$. Noncommutative polynomials arising in this way will be compatible with the situations described by the diagrams above.
Moreover, also identities of operators can be viewed as polynomials in the same indeterminates by forming the difference of their left and right hand sides. In particular, the assumption $AA^-A=A$ used above for rewriting corresponds to the polynomial
\[
 g:=aa^-a-a
\]
and the claim $A(A^-+Y-YAA^-)A=A$ corresponds to
\[
 f:=a(a^-+y-yaa^-)a-a.
\]
Then, the computation above can be rewritten as $f = g-ayg$, where $g$ and $-ayg$ correspond to the rewriting done in the second equality. In particular, this shows that $f$ is contained in the ideal generated by $g$.
Since the operator corresponding to $g$ is zero, the operator corresponding to $f$ is zero as well, if all intermediate steps of the computation are compatible with the domains and codomains of the operators.

In general, assumptions consist of several identities and one considers the ideal generated by the noncommutative polynomials corresponding to these identities.
If an operator identity follows from the assumptions by arithmetic operations with operators (i.e.\ addition, composition, and scaling), then the polynomial corresponding to this identity is contained in the ideal. Conversely, if we know that a polynomial lies in the ideal, does there exist a computation with operators that proves the corresponding operator identity?
An answer to this question is not obvious since, in contrast to computations with actual operators, computations with polynomials are not restricted and all sums and products can be formed. Obviously, elements of the ideal that are not compatible with the diagrams above cannot correspond to identities of operators anyway.
However, our main result states that any element of the ideal corresponds to a true operator identity as soon as it is compatible.
Note that, showing membership in the ideal can be done independently of the operator context. In other words, proving an operator identity is reduced to checking ideal membership of the corresponding polynomial and verifying that this polynomial and the generators of the ideal all are compatible with the diagram describing the domains and codomains of operators.
Hence, once ideal membership is shown, any diagram that is compatible with the polynomials encoding assumptions and claim will automatically give rise to a valid statement about operators.

For commutative polynomials over a field, ideal membership can be decided by Buchberger's algorithm \cite{Buchberger1965} computing a Gr\"obner basis of the ideal.
In contrast, ideal membership of noncommutative polynomials is undecidable in general. Still, a noncommutative analog of Buchberger's algorithm \cite{Mora1994} can be used to verify ideal membership in many cases in practice.
The same can be said about noncommutative polynomials over a finitely generated commutative ring $\R$ with unit element, see \cite{MikhalevZolotykh1998} and references therein.
If a given polynomial can be verified to lie in a given ideal, then the computation of a (partial) Gr\"obner basis can be done in a way that provides an explicit representation of the polynomial in terms of the generators of the ideal. Such a representation can then be checked independently.

Gr\"obner bases for noncommutative polynomials have been applied to operator identities in the pioneering work \cite{HeltonWavrik1994,HeltonStankusWavrik1998}. There, Gr\"obner bases are used to simplify matrix identities in linear systems theory. In~\cite{HeltonStankus1999,Kronewitter2001}, Gr\"obner bases techniques are applied to discover operator identities and to solve matrix equations and matrix completion problems.
It has been observed in the literature that the operations used in the noncommutative analog of Buchberger's algorithm respect compatibility of polynomials with domains and codomains of operators, cf.~\cite[Thm.~25]{HeltonStankusWavrik1998}. For an analogous observation in the context of path algebras, see~\cite[Sec.~47.10]{Mora2016}.
Shortly before initial submission of the present article, we were informed in personal communication that questions related to proving operator identities via computations of Gr\"obner bases are also addressed in \cite{LevandovskyySchmitz2019}.

An alternative approach to modelling computations with operators does not use computations in algebras but in partial algebras arising from diagrams, for which an analogous notion of Gr\"obner bases was sketched in \cite[Sec.~9]{Bergman1978} and developed in \cite{BokutChenLi2012}.
Moreover, using rewriting for operators with domains and codomains, generalizations of Gr\"obner bases and syzygies are considered in \cite{GuiraudHoffbeckMalbos2019}, where higher-dimensional linear rewriting systems are introduced.

\subsection{Summary of the algebraic framework}
\label{sec:summary}

For the convenience of the reader, we outline the main notions and statements of the paper.
From an algebraic point of view, the restrictions imposed above on polynomials in the noncommutative indeterminates $a,a^-,y$ are encoded by the following directed multigraph where edges are labelled by indeterminates, which is also called a labelled quiver.
\begin{center}
\begin{tikzpicture}
\begin{scope}
 \matrix (m) [matrix of math nodes, column sep=3cm]
  {\bullet & \bullet \\};
 \path[->] (m-1-1) edge [bend left] node [auto] {$a$} (m-1-2);
 \path[->] (m-1-2) edge [bend left] node [auto] {$a^-$} (m-1-1);
 \path[->] (m-1-2) edge [bend left=15] node [auto, swap] {$y$} (m-1-1);
\end{scope}
\end{tikzpicture}
\end{center}
In order to translate compatible polynomials back to operators, we assign spaces $V$ and $W$ to the vertices of the quiver and to the edges we assign operators $A,A^-,Y$ mapping between corresponding spaces. Such an assignment is called a representation of the quiver. Both situations above are representations of this quiver, depending on whether the spaces $V$ and $W$ are the same.
For each representation of the quiver, a compatible polynomial gives rise to a realization as an operator by replacing the indeterminates by the operators assigned to them.

In general, operators are viewed as $K$-linear maps over a field $K$, e.g.\ $K=\mathbb{R}$ or $K=\mathbb{C}$ for bounded linear operators. The corresponding indeterminates are collected in a set $X$ and computations are done in the free algebra $K\langle{X}\rangle$ of noncommutative polynomials with coefficients in $K$.
The diagram describing this situation is formalized as a \emph{labelled quiver} $Q$, which simply is a directed multigraph where edges have labels in $X$. Then, composition of operators corresponds to paths in $Q$ and sums can only be formed when each path of the summands has same start and same end. A polynomial in $K\langle{X}\rangle$ is called \emph{compatible} with $Q$ if all its monomials correspond to paths in $Q$ with the same start and same end.

The known identities satisfied by the operators in question are translated to a set $F \subseteq K\langle{X}\rangle$. Therefore, operators corresponding to elements of $F$ are zero. For this set $F$, we consider the (two-sided) ideal $(F)$ in $K\langle{X}\rangle$, which is given by all polynomials of the form $f=\sum_ia_if_ib_i$ with $a_i,b_i \in K\langle{X}\rangle$ and $f_i \in F$.
We call a polynomial $f \in (F)$ a \emph{$Q$-consequence} of $F$ if it can be obtained from $F$ by doing only computations with polynomials that are compatible with the labelled quiver $Q$. As will be shown later, this means that the operator corresponding to $f$ is obtained by a valid computation with operators and hence is zero as well, i.e.\ the operator identity corresponding to $f$ holds.
One key point in our approach is to characterize those elements of the ideal $(F)$ that are $Q$-consequences of $F$.
The following characterization arises later as Corollary~\ref{cor:consequences}. If $f \in (F)$ and $Q$ is such that the elements of $F$ are compatible with $Q$, then
 \[
  f\text{ is compatible with }Q \quad\Longleftrightarrow\quad f\text{ is a $Q$-consequence of }F.
 \]

In order to rigorously obtain statements about operators from statements about polynomials in $K\langle{X}\rangle$, we assign a $K$-vector space to each vertex of $Q$ and to each edge of $Q$ we assign a $K$-linear map between corresponding spaces. The family $\mathcal{V}$ of vector spaces together with the assignment $\varphi$ of linear maps is called a \emph{representation} of $Q$, c.f.\ \cite{DerksenWeyman2005}.
For polynomials that are compatible with $Q$, we obtain \emph{realizations} of them as $K$-linear maps via such a representation $(\mathcal{V},\varphi)$ in the way indicated above.
A version of the main result of this paper can then be stated as follows. A more general statement is proven later in Theorem~\ref{thm:Klinear} with the special case explained in Remarks~\ref{rem:compatibilityUnique} and \ref{rem:consistencyUnique}.
The proof relies on the above characterization of $Q$-consequences and on the fact that realizations of $Q$-consequences can be expressed, as $K$-linear maps, in terms of realizations of elements of $F$.
Note that the notion of $Q$-consequences is only used in the proof of the main result and does not play a role in its statement.
In order to avoid constant terms in polynomials as assumed by the theorem below, identity operators occurring have to be assigned their own indeterminates and properties of composition with other operators have to be included in the assumptions.

\begin{theorem}\label{thm:KlinearUnique}
 Let $K$ be a field, let $F \subseteq K\langle{X}\rangle$ such that the polynomials in $F$ do not have a constant term, and let $f \in (F)$. Moreover, let $Q$ be a labelled quiver where edges have unique labels in $X$ and assume that $f$ and all elements of $F$ are compatible with $Q$. Then, for all representations $\mathcal{R}=(\mathcal{V},\varphi)$ of $Q$ such that the realizations of the elements of $F$ w.r.t.\ $\mathcal{R}$ are zero, we have that the realization of $f$ w.r.t.\ $\mathcal{R}$ is zero.
\end{theorem}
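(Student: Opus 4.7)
The plan is to reduce the theorem to the characterization of $Q$-consequences already announced in the excerpt (Corollary~\ref{cor:consequences}) and then to translate the resulting compatible computation directly into a $K$-linear identity between realizations. Since $f\in(F)$, $f$ is compatible with $Q$, and every element of $F$ is compatible with $Q$, Corollary~\ref{cor:consequences} lets me conclude that $f$ is a $Q$-consequence of $F$. By the definition sketched in Section~\ref{sec:summary}, this means $f$ can be obtained from the elements of $F$ by a finite sequence of $K$-scalings, additions, and left/right multiplications in which every intermediate expression is compatible with $Q$. Equivalently, and in the form most convenient for the theorem, there is a representation $f=\sum_{i}a_i f_i b_i$ with $a_i,b_i\in K\langle{X}\rangle$ and $f_i\in F$ in which each individual summand $a_i f_i b_i$ is compatible with $Q$.

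Next, I would check that for the representation $\mathcal{R}=(\mathcal{V},\varphi)$ any polynomial compatible with $Q$ has a well-defined realization as a $K$-linear map between the source and target spaces of its (common) underlying paths. This uses the uniqueness of edge labels, so each indeterminate unambiguously names a single edge and hence a single map, together with compatibility, so all monomials share their source and target vertex and can therefore be summed after realization. One then verifies the two arithmetic properties on which everything turns: realization is additive on compatible polynomials sharing source and target, and it is multiplicative on composable compatible factors, i.e.\ $\varphi(pq)=\varphi(p)\varphi(q)$ whenever $pq$ is compatible with $Q$. Applying these properties to the compatible representation of $f$ produced in the first step yields
\[
 \varphi(f)=\sum_{i}\varphi(a_i)\,\varphi(f_i)\,\varphi(b_i),
\]
and since $\varphi(f_i)=0$ for each $f_i\in F$ by hypothesis, the conclusion $\varphi(f)=0$ follows at once.

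The main obstacle is already dispatched by Corollary~\ref{cor:consequences}: a generic ideal representation $f=\sum_j\tilde a_j\tilde f_j\tilde b_j$ need not have summands compatible with $Q$, and naively realizing such a representation fails, since the intermediate factors $\tilde a_j,\tilde f_j,\tilde b_j$ need not act between spaces of $\mathcal{R}$ in the first place. All the substantive content lies in the preceding theory that upgrades ideal membership together with compatibility of $f$ and of $F$ to the existence of a compatible representation; once this is available, the remaining step is the essentially formal verification that realization is a partial homomorphism on the compatibility-restricted arithmetic. The hypothesis that elements of $F$ carry no constant term enters precisely here: a constant in $K\langle{X}\rangle$ would encode an identity map, but there is no single canonical identity edge in $Q$, so forbidding such terms ensures that every step of the compatible computation has an unambiguous edge-by-edge interpretation under $\varphi$.
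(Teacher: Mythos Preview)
Your proposal is correct and follows essentially the same route as the paper: Theorem~\ref{thm:KlinearUnique} is obtained as the special case of Theorem~\ref{thm:Klinear} via Remarks~\ref{rem:compatibilityUnique} and~\ref{rem:consistencyUnique}, and the proof of Theorem~\ref{thm:Klinear} proceeds exactly by invoking the $Q$-consequence characterization (Theorem~\ref{thm:Qconsequences}/Corollary~\ref{cor:consequences}) and then applying the additivity and multiplicativity of realizations (Lemma~\ref{lem:productrealizationsK}). The only point where you are slightly looser than the paper is in saying merely that each summand $a_if_ib_i$ is compatible: the definition of $Q$-consequence in fact gives $a_i,b_i\in K\langle X\rangle_Q$ and $\sigma(a_if_ib_i)\supseteq\sigma(f)$, which is precisely what guarantees that the individual factors $a_i,b_i$ have realizations with matching source/target so that your displayed identity $\varphi(f)=\sum_i\varphi(a_i)\varphi(f_i)\varphi(b_i)$ is well-posed.
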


Based on this theorem, the sufficient formal computation for proving claimed properties of operators with fixed domains and codomains is given by the following three steps.
\begin{enumerate}
 \item All the assumptions on the operators involved have to be phrased in terms of identities involving those operators. Likewise, the claimed properties have to be expressed as identities of these operators.
 \item These identities are converted into polynomials by uniformly replacing the individual operators by noncommutative indeterminates from some set $X$ in the differences of the left and right hand sides.
 \item Prove that the polynomials corresponding to the claim lie in the ideal generated by the set $F$ of polynomials corresponding to the assumptions.
\end{enumerate}
Then, our main result shows that the assumptions on the operators involved imply the claimed identities. This relies on the important fact that all polynomials arising from identities of operators are automatically compatible with the quiver $Q$ describing the situation of operators.
Just by changing the representation $(\mathcal{V},\varphi)$ of $Q$, we even obtain analogous statements about other types of operators in a rigorous way without doing any additional computations.
See Example~\ref{ex:WoodburyInner} for a worked example.

In practice, ideal membership $f \in (F)$ can often be shown with the help of the computer via computing a (partial) Gr\"obner basis of $(F)$, even though the problem is undecidable in general. Checking compatibility of polynomials can be automatized as well.
The \textsc{Mathematica} package \texttt{OperatorGB} provides functionality for both tasks and it also provides an explicit representation of $f$ in terms of the generators of the ideal, see \cite{HofstadlerRaabRegensburger2019}. The package is available at \url{http://gregensburger.com/softw/OperatorGB} along with documentation and examples.
An implementation in \textsc{SageMath} is also available from the same webpage.

The algebraic framework and the main results can be generalized in various ways.
For instance, the field $K$ can be replaced by a commutative ring $\R$ with unit element if at the same time vector spaces are replaced by $\R$-modules.
Moreover, in some situations, it might be possible to model some of the operators involved by the same indeterminate, because they satisfy the same family of identities. This mainly happens if those operators actually are versions of the same operator with different domains and codomains, e.g.\ differential and integral operators can be interpreted on different domains. Example~\ref{ex:ODEfact} is used to illustrate this. The advantage of using the same indeterminate to represent different operators is that computations with polynomials usually are simpler.
Even in the trivial example above, the operator $A$ could exist in two different versions that have different domains $A_1:U\to{W}$ and $A_2:V\to{W}$. The following quiver and representation describe this situation.
\begin{center}
\begin{tikzpicture}
\begin{scope}
 \matrix (m) [matrix of math nodes, column sep=2cm]
  {\bullet & \bullet & \bullet\\};
 \path[->] (m-1-3) edge node [auto] {$a$} (m-1-2);
 \path[->] (m-1-1) edge [bend left] node [auto] {$a$} (m-1-2);
 \path[->] (m-1-2) edge [bend left] node [auto] {$a^-$} (m-1-1);
 \path[->] (m-1-2) edge [bend left=15] node [auto, swap] {$y$} (m-1-1);
\end{scope}
\begin{scope}[xshift=7cm]
 \matrix (m) [matrix of math nodes, column sep=2cm]
  {V & W & U\\};
 \path[->] (m-1-3) edge node [auto] {$A_1$} (m-1-2);
 \path[->] (m-1-1) edge [bend left] node [auto] {$A_2$} (m-1-2);
 \path[->] (m-1-2) edge [bend left] node [auto] {$A^-$} (m-1-1);
 \path[->] (m-1-2) edge [bend left=15] node [auto, swap] {$Y$} (m-1-1);
\end{scope}
\end{tikzpicture}
\end{center}
For instance, $V$ might be a subspace of $U$ and $A_2$ the restriction of $A_1$ on $V$. So, both $A_1$ and $A_2$ are modelled by the indeterminate $a$ in the algebraic computation with polynomials $f$ and $g$ above. Then, the fact that both realizations of the polynomial $g$ are zero, i.e.\ $A_2A^-A_1=A_1$ and $A_2A^-A_2=A_2$, implies that both realizations of $f$ are zero, i.e.\ $A_2(A^-+Y-YA_2A^-)A_1=A_1$ and $A_2(A^-+Y-YA_2A^-)A_2=A_2$.

We also extend our algebraic framework to treat such cases rigorously, which makes some definitions and auxiliary results more complicated.
In particular, compatibility of polynomials with a labelled quiver can be generalized in different ways.
For the polynomial corresponding to the claim it suffices that, for all its monomials, there exist corresponding paths with the same start and same end. For each of the polynomials corresponding to the assumptions, however, we need to impose the stronger condition that the possibilities for start and end of corresponding paths are uniform for all its monomials.

The remainder of this paper gives formal definitions and detailed proofs for making the general form of the framework completely rigorous. The introductory example above will be used to illustrate the notions introduced and we also give more worked out examples to illustrate the use of the algebraic framework.
We explain rewriting of polynomials in $\R\langle{X}\rangle$ and its relation to ideal membership in Section~\ref{sec:rewriting}.
In Section~\ref{sec:compatibility}, we will give a formal definition of compatibility and uniform of polynomials with a quiver that allows several edges of the quiver to have the same label. We also discuss the relation of computations in $\R\langle{X}\rangle$ with compatibility, including the key observation Lemma~\ref{lem:inclusion}.
The notion of $Q$-consequences is formalized and discussed in Section~\ref{sec:Qconsequences}.
Finally, in Section~\ref{sec:RlinearOperators}, the implications for proving operator identities are laid out in detail arriving at our main result Theorem~\ref{thm:Klinear} for $K$-linear maps on $K$-vector spaces. Then, using the more abstract language of categories, a more general version of the main result is given as Theorem~\ref{thm:Rlinear}, where operators are morphisms in $\R$-linear categories.

\section{Rewriting in the free algebra}
\label{sec:rewriting}

For a set $X$, we denote the word monoid with alphabet $X$ by $\langle{X}\rangle$, where multiplication is given by concatenation of words. It is the free monoid on $X$.
For a commutative ring $\R$ with unit element, we recall the free algebra $\R\langle{X}\rangle$ over $\R$ on $X$. It can be regarded as the ring of noncommutative polynomials in the set of indeterminates $X$ with coefficients in $\R$, where indeterminates commute with coefficients but not with each other. The monomials are words $x_1\dots x_n \in \langle{X}\rangle$, $x_i \in X$, including the empty word $1$. Every polynomial $f \in \R\langle{X}\rangle$ has a unique representation as a sum
\[
 f=\sum_{m\in\langle{X}\rangle}c_mm
\]
with coefficients $c_m \in \R$, such that only finitely many coefficients are nonzero, and its support is defined as
\[
 \supp(f):=\{m\in\langle{X}\rangle\ |\ c_m\neq0\},
\]
where $c_m$ are as above. In short, $\R\langle{X}\rangle$ is the monoid ring on $\langle{X}\rangle$ over $\R$.
In what follows, we fix a commutative ring $\R$ with unit element.

When formally rewriting an expression for an operator using known identities, the steps taken can be formalized with noncommutative polynomials as follows. Proving an identity of two expressions amounts to rewriting their difference to zero.

\begin{definition}
 Let $f,g \in \R\langle{X}\rangle$ such that some monomial $m_g \in \supp(g)$ divides some monomial $m_f \in \supp(f)$, i.e.\ $m_f=am_gb$ for some monomials $a,b \in \langle{X}\rangle$.
 For every $\lambda \in \R$, we say that
 \[
  f+\lambda agb \in \R\langle{X}\rangle
 \]
 can be obtained from $f$ by a rewriting step using $g$.
 Furthermore, let $G \subseteq \R\langle{X}\rangle$ and $h \in \R\langle{X}\rangle$. We say that $f$ \emph{can be rewritten to $h$ using $G$}, if there are $f_0,\dots,f_n \in \R\langle{X}\rangle$, $f_0=f$, $f_n=h$, and $g_1,\dots,g_n \in G$ such that, for all $i \in \{1,\dots,n\}$, $f_i$ can be obtained from $f_{i-1}$ by a rewriting step using $g_i$.
\end{definition}

\begin{example}
 We consider the noncommutative polynomial ring $\mathbb{Z}\langle{X}\rangle$ with indeterminates $X=\{a,a^-,y\}$ and the two polynomials $g = aa^-a-a$ and $f = a(a^-+y-yaa^-)a-a$ from the introductory example.
 We check that $f = aa^-a+aya-ayaa^-a-a$ can be rewritten to zero using $\{g\}$. The first monomial of $f$ is the same as the first monomial $m_g := aa^-a$ of $g$. If we choose $\lambda=-1$, we obtain $h := f+\lambda g = aya-ayaa^-a$ with one rewriting step using $g$. The second monomial of $h$ can be written as $aym_g$, so we obtain $h + ayg = 0$ with one more rewriting step using $g$.
\end{example}

The above notion of rewriting steps for (noncommutative) polynomials is a rather weak one. A rewriting step does not necessarily reduce a polynomial $f$ to a {\lq\lq}simpler{\rq\rq} one in any sense, not even does it necessarily reduce or simplify the coefficient of the monomial $m_f$, which is acted on, in any directed way.
Still, once appropriate monomials $m_f,m_g,a,b$ are selected, one often chooses $\lambda$ such that $m_f$ is removed from the support of $f$, if possible. This in turn allows to obtain the following crucial equivalence.

\begin{lemma}\label{lem:idealmembership}
 Let $F \subseteq \R\langle{X}\rangle$ and $f \in \R\langle{X}\rangle$. Then, $f$ lies in the ideal $(F)$ if and only if $f$ can be rewritten to zero using $F$.
\end{lemma}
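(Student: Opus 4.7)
The plan is to prove the two implications directly, with the forward direction requiring a short induction.

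For the direction $(\Leftarrow)$, suppose $f=f_0,f_1,\dots,f_n=0$ is a rewriting sequence with $f_i-f_{i-1}=\lambda_i a_i g_i b_i$ and $g_i\in F$. Telescoping yields $f=-\sum_{i=1}^n \lambda_i a_i g_i b_i$, which is visibly an element of the ideal $(F)$.

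For the nontrivial direction $(\Rightarrow)$, I would use that $(F)$ equals the set of all finite sums $\sum_{k=1}^{N} c_k a_k g_k b_k$ with $c_k\in\R$, monomials $a_k,b_k\in\langle{X}\rangle$, and $g_k\in F$, and induct on the length $N$ of such a representation of $f$. The base $N=0$ gives $f=0$, handled by the empty rewriting sequence. In the inductive step, if $f=0$ there is nothing to do; otherwise pick any monomial $w\in\supp(f)$. Since the coefficient of $w$ in $f=\sum_k c_k a_k g_k b_k$ is nonzero, some index $k^*$ must contribute nontrivially, i.e.\ $w\in\supp(a_{k^*}g_{k^*}b_{k^*})$. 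By cancellativity of concatenation in the free monoid $\langle{X}\rangle$, this forces $w=a_{k^*}mb_{k^*}$ for a (necessarily unique) $m\in\supp(g_{k^*})$. This $m$ witnesses the divisibility condition, so the step $f\mapsto f-c_{k^*}a_{k^*}g_{k^*}b_{k^*}$ using $g_{k^*}$, with $a:=a_{k^*}$, $b:=b_{k^*}$, and $\lambda:=-c_{k^*}$, is legal. The result equals $\sum_{k\neq k^*}c_k a_k g_k b_k$, a representation of length $N-1$, which rewrites to $0$ by the induction hypothesis. Concatenation produces a rewriting of $f$ to $0$ using $F$.

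The only real subtlety is ensuring the divisibility condition of the rewriting step. Naively subtracting $c_{k^*}a_{k^*}g_{k^*}b_{k^*}$ for a preselected index $k^*$ can fail, because an a priori chosen witness monomial in $\supp(g_{k^*})$ may have been cancelled in $f$ by other terms of the representation. The resolution, which is the main idea of the proof, is to select the witness $w\in\supp(f)$ first and let the representation itself single out an appropriate $k^*$ and $m$; by construction $w$ has survived cancellation, so the needed witness is guaranteed to exist.
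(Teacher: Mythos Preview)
Your proof is correct and follows essentially the same approach as the paper's own proof. The paper likewise telescopes for the $(\Leftarrow)$ direction, and for $(\Rightarrow)$ it removes one summand at a time by choosing an index $j$ such that $\supp\!\bigl(\sum_i \lambda_i a_i f_i b_i\bigr)\cap\supp(\lambda_j a_j f_j b_j)\neq\emptyset$; your ``pick $w\in\supp(f)$ first, then find $k^*$'' is exactly this choice, phrased in the reverse order.
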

\begin{proof}
 Note that any sum of the form $\sum_{i=1}^n\lambda_ia_if_ib_i$ with $\lambda_i \in R$, $a_i,b_i \in \langle{X}\rangle$, and $f_i \in F$ either is zero or we can remove a summand $\lambda_ja_jf_jb_j$ from it by a rewriting step using $f_j$, where $j \in \{1,\dots,n\}$ is such that $\supp(\sum_{i=1}^n\lambda_ia_if_ib_i) \cap \supp(\lambda_ja_jf_jb_j) \neq \emptyset$. This implies inductively that any sum of the form $\sum_{i=1}^n\lambda_ia_if_ib_i$ with $\lambda_i \in R$, $a_i,b_i \in \langle{X}\rangle$, and $f_i \in F$ can be rewritten to zero using $F$. Since any $f \in (F)$ has a representation as a sum of that form, every element of $(F)$ can be rewritten to zero using $F$.\par
 For the converse, we observe that the definition of a rewriting step implies $f-h \in (F)$, if $h \in \R\langle{X}\rangle$ can be obtained from $f \in \R\langle{X}\rangle$ by a rewriting step using some $g \in F$. Now, if $f \in \R\langle{X}\rangle$ can be rewritten to zero using $F$, there are $h_0,\dots,h_n \in \R\langle{X}\rangle$, with $h_0=f$ and $h_n=0$ such that, for all $i \in \{1,\dots,n\}$, $h_i$ can be obtained from $h_{i-1}$ by a rewriting step using some $g_i \in F$. Therefore, $f=h_0-h_n=\sum_{i=1}^nh_{i-1}-h_i$ lies in $(F)$.
\end{proof}

\section{Compatible polynomials}
\label{sec:compatibility}

Recall that a tuple $(V,E,s,t)$ with $s,t : E\to{V}$ is called a \emph{quiver} with vertices $V$ and edges $E$. For each edge $e \in E$, the vertices $s(e)$ and $t(e)$ are called its \emph{source} and \emph{target}.
For brevity, we call a quiver with labelled edges a \emph{labelled quiver}.
A labelled quiver with labels $X$ is a tuple $Q=(V,E,X,s,t,l)$ with $s,t : E\to{V}$ and $l:E\to{X}$, where $l(e)$ gives the label of an edge $e \in E$.

In what follows, we fix a labelled quiver $Q=(V,E,X,s,t,l)$.
Based on the labels of edges, we define labels of paths so that concatenation of paths corresponds to multiplication of labels in $\langle{X}\rangle$.
For a nonempty path $p=e_n{\dots}e_1$ in $Q$, i.e.\ $e_i \in E$, we define its label as the monomial
\[
 l(p) := l(e_n)\dots{l(e_1)} \in \langle{X}\rangle
\]
and its source and target as $s(p):=s(e_1)$ and $t(p):=t(e_n)$.
For every vertex $v \in V$, there is a distinct empty path $\epsilon_v$ that starts and ends in $v$ without passing through any edge in between. Its label is defined by
\[
 l(\epsilon_v) := 1 \in \langle{X}\rangle
\]
and its source and target are given by $s(\epsilon_v):=v$ and $t(\epsilon_v):=v$.

\begin{definition}
 For any monomial $m \in \langle{X}\rangle$, we define the set of \emph{signatures} of $m$ as
 \[
  \sigma(m):=\{(s(p),t(p))\ |\ p\text{ a path in $Q$ with }l(p)=m\} \subseteq V\times{V}.
 \]
 For a polynomial $f \in \R\langle{X}\rangle$, we define its set of signatures by
 \[
  \sigma(f):=\bigcap_{m\in\supp(f)}\sigma(m) \subseteq V\times{V}.
 \]
\end{definition}

Note that we have $\sigma(0)=V\times{V}$ and $\sigma(1)=\{(v,v)\ |\ v \in V\}$.
In the special case when edges have unique labels, any nonconstant polynomial obviously has at most one signature.
The sets of sources and targets of a polynomial $f$ are given by the projections of the set of signatures $\sigma(f)$, i.e.\ for the set of sources of $f$ we have $s(f)=\{v \in V\ |\ \exists{w \in V}:(v,w) \in \sigma(f)\}$ and analogously for $t(f)$.

\begin{definition}
 A polynomial $f \in \R\langle{X}\rangle$ is said to be \emph{compatible} with the quiver $Q$, if $\sigma(f)\neq\emptyset$.
 Moreover, for $v,w \in V$, we define the set
 \[
  \label{def:module}
  \R\langle{X}\rangle_{v,w} := \{f \in \R\langle{X}\rangle\ |\ (v,w) \in \sigma(f)\}.
 \]
\end{definition}

In particular, a monomial $m \in \langle{X}\rangle$ is compatible with $Q$ if and only if there is a path $p$ in $Q$ with $l(p)=m$.
For every $v,w \in V$ we have $0 \in \R\langle{X}\rangle_{v,w}$.
Also note that for each $f \in \R\langle{X}\rangle_{v,w}$ there are $n \in \mathbb{N}$, $c_i\in\R$, and paths $p_i$ in $Q$ from $v$ to $w$ such that
\[
 f=\sum_{i=1}^nc_il(p_i).
\]
Consequently, $\R\langle{X}\rangle_{v,w}$ is an $\R$-module and the set of polynomials in $\R\langle{X}\rangle$ compatible with $Q$ is given by $\bigcup_{v,w \in V}\R\langle{X}\rangle_{v,w}$.

\begin{remark}\label{rem:compatibilityUnique}
 If, as in Theorem~\ref{thm:KlinearUnique}, edges have unique labels and only polynomials without constant term are considered, then such polynomials have at most one signature. Therefore, compatible polynomials are automatically uniformly compatible, as defined below.
 Under these assumptions, the sum of two nonzero compatible polynomials is compatible if and only if they have the same signature. Moreover, if $f$ has signature $(v_1,w)$, $g$ has signature $(u,v_2)$, and $fg\neq0$, then the product is compatible if and only if $v_1=v_2$, in which case it has $(u,w)$ as its signature.
 The reader who is only interested in these simplifying assumptions of Theorem~\ref{thm:KlinearUnique} can skip the following two technical lemmas and continue with Lemma~\ref{lem:inclusion}.
\end{remark}

\begin{definition}
 A polynomial $f \in \R\langle{X}\rangle$ that is compatible with the quiver $Q$ is called \emph{uniformly compatible} with $Q$ if all monomials $m\in\supp(f)$ have the same set of signatures $\sigma(m)$.
 Moreover, we define
 \[
  \R\langle{X}\rangle_Q:=\{f \in \R\langle{X}\rangle\ |\ f\text{ is uniformly compatible with }Q\}.
 \]
\end{definition}

Note that any compatible monomial $m \in \langle{X}\rangle$ is automatically uniformly compatible with $Q$.
By definition, a polynomial containing a constant term is only uniformly compatible with $Q$ if all its monomials have the set of signatures $\sigma(1)=\{(v,v)\ |\ v \in V\}$.
If the quiver has only one vertex and for all $x \in X$ there is an edge (i.e.\ loop) with the label $x$, then $\R\langle{X}\rangle_Q=R\langle{X}\rangle$.

If two labelled quivers are isomorphic as quivers, i.e.\ they are equal up to renaming vertices and edges, and corresponding edges have the same labels, then the corresponding sets of compatible polynomials are the same.
Consequently, when drawing a labelled quiver, it suffices to show the labels of the edges.

\begin{example}
\label{ex:compatible}
 We consider the following labelled quivers with $X=\{a,a^-,y\}$ for the diagrams of the introductory example.
\begin{center}
\begin{tikzpicture}
\begin{scope}[xshift=-4.5cm]
 \node (v) {$\bullet$};
 \path[->] (v) edge [loop left] node [auto] {$a$} (v) edge [loop right] node [auto] {$a^-$} (v) edge [loop below] node [auto] {$y$} (v);
\end{scope}
\begin{scope}
 \matrix (m) [matrix of math nodes, column sep=2cm]
  {\bullet & \bullet \\};
 \path[->] (m-1-1) edge [bend left] node [auto] {$a$} (m-1-2);
 \path[->] (m-1-2) edge [bend left] node [auto] {$a^-$} (m-1-1);
 \path[->] (m-1-2) edge [bend left=15] node [auto, swap] {$y$} (m-1-1);
\end{scope}
\begin{scope}[xshift=6cm]
 \matrix (m) [matrix of math nodes, column sep=2cm]
  {\bullet & \bullet & \bullet\\};
 \path[->] (m-1-3) edge node [auto] {$a$} (m-1-2);
 \path[->] (m-1-1) edge [bend left] node [auto] {$a$} (m-1-2);
 \path[->] (m-1-2) edge [bend left] node [auto] {$a^-$} (m-1-1);
 \path[->] (m-1-2) edge [bend left=15] node [auto, swap] {$y$} (m-1-1);
\end{scope}
\end{tikzpicture}
\end{center}
 We check that the two polynomials $g = aa^-a-a$ and $f = a(a^-+y-yaa^-)a-a$ in $\mathbb{Z}\langle{X}\rangle$, which were considered in the introductory example, are compatible with each of those quivers.
 The quiver on the left has only one vertex, so all polynomials in $\mathbb{Z}\langle{X}\rangle$ are (uniformly) compatible with it, since it contains a loop for each element of $X$.
 For the quiver in the middle, we easily check that all monomials occurring in $g$ and $f = aa^-a+aya-ayaa^-a-a$ are compatible with it and that they have the same set of signatures, which contains a unique element in this case. So, both of these polynomials are uniformly compatible with this quiver.
 The last quiver has two edges with the label $a$. Still, all monomials in $g$ and $f$ have the same set of signatures, which now contains two elements, making both uniformly compatible with that quiver.
\end{example}

In general, the set of compatible polynomials and the set $\R\langle{X}\rangle_Q$ of uniformly compatible polynomials are not closed under addition. However, based on the definition, the following facts are easy to see.

\begin{lemma}\label{lem:addition}
 Let $f,g \in \R\langle{X}\rangle$ be compatible with $Q$ such that $\sigma(f)\cap\sigma(g)\neq\emptyset$. Then, $f+g$ is compatible with $Q$ and $\sigma(f+g) \supseteq \sigma(f)\cap\sigma(g)$. Moreover, if $f,g \in \R\langle{X}\rangle_Q$ with $\sigma(f)=\sigma(g)$, then $f+g \in \R\langle{X}\rangle_Q$.
\end{lemma}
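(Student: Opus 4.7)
The plan is to exploit two very simple observations: that $\supp(f+g)\subseteq\supp(f)\cup\supp(g)$, and that $\sigma(\cdot)$ is defined as an intersection of monomial signature sets. Both assertions will then follow almost formally from unfolding the definitions.

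First I would establish the inclusion $\sigma(f+g)\supseteq\sigma(f)\cap\sigma(g)$. Pick any $(v,w)\in\sigma(f)\cap\sigma(g)$. For every monomial $m\in\supp(f+g)$, the support inclusion above forces $m\in\supp(f)$ or $m\in\supp(g)$, and in either case the hypothesis gives $(v,w)\in\sigma(m)$. Taking the intersection over $\supp(f+g)$ yields $(v,w)\in\sigma(f+g)$. In particular, since the hypothesis guarantees that $\sigma(f)\cap\sigma(g)$ is nonempty, so is $\sigma(f+g)$, which means $f+g$ is compatible with $Q$. (The trivial case $f+g=0$ is also fine, since then $\sigma(f+g)=V\times V$ by convention.)

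For the uniform compatibility statement, I would first note that if $f\in\R\langle X\rangle_Q$, then every $m\in\supp(f)$ has the same set of signatures, and this common set must equal $\sigma(f)$ (an intersection of copies of one set is that set). Applying the same to $g$, the equality $\sigma(f)=\sigma(g)$ means that every monomial in $\supp(f)\cup\supp(g)$ has signature set equal to this common value $S:=\sigma(f)=\sigma(g)$. Since $\supp(f+g)\subseteq\supp(f)\cup\supp(g)$, all monomials in $\supp(f+g)$ share the same signature set $S$, which is nonempty by the first part. This is exactly the definition of $f+g$ being uniformly compatible with $Q$.

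There is no real obstacle here; the argument is a routine bookkeeping of supports and intersections. The only point that requires a moment of care is to identify the common monomial-signature set of a uniformly compatible polynomial $f$ with $\sigma(f)$ itself, so that the hypothesis $\sigma(f)=\sigma(g)$ can be transferred to the individual monomials of $g$ (and vice versa) in order to conclude uniformity of $f+g$.
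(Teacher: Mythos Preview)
Your proposal is correct and is exactly the routine unfolding of definitions the paper has in mind; in fact the paper does not give a proof of this lemma at all, stating only that ``based on the definition, the following facts are easy to see.'' The one small edge case you could make explicit is when $f=0$ (or $g=0$): then there is no ``common monomial signature set'' to identify with $\sigma(f)$, but the conclusion is immediate since $f+g=g$.
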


Likewise, the set of compatible polynomials and the set $\R\langle{X}\rangle_Q$ are not closed under multiplication in general, they are only closed under scalar multiplication by elements of $\R$.
However, if $s(f) \cap t(g) \neq \emptyset$, the product $fg$ is again compatible resp.\ uniformly compatible with $Q$, as the following lemma shows.

\begin{lemma}\label{lem:multiplication}
 Let $f,g \in \R\langle{X}\rangle$ be compatible with $Q$ such that $s(f) \cap t(g) \neq \emptyset$.
 Then, $fg$ is compatible with $Q$ and we have
 \[
  \sigma(fg)\supseteq\{(u,w) \in s(g)\times{t(f)}\ |\ \exists{v \in s(f) \cap t(g)}: (u,v) \in \sigma(g) \wedge (v,w) \in \sigma(f)\}.
 \]
 If in addition $f$ and $g$ are uniformly compatible with $Q$ and $fg\neq0$, then equality holds and $fg$ is uniformly compatible as well.
\end{lemma}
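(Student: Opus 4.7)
The plan is to prove the compatibility inclusion first by directly exhibiting paths, then handle the reverse inclusion and uniform compatibility under the additional hypotheses by invoking the rigidity of uniformly compatible polynomials.

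For the inclusion $\sigma(fg) \supseteq S$ where $S$ denotes the set on the right-hand side, I would fix $(u,w) \in S$ together with a witness $v \in s(f)\cap t(g)$ satisfying $(u,v)\in\sigma(g)$ and $(v,w)\in\sigma(f)$. Since $\supp(fg)\subseteq\{m_1m_2 : m_1\in\supp(f),\,m_2\in\supp(g)\}$, every $m\in\supp(fg)$ admits at least one such factorization $m=m_1m_2$. Using $(v,w)\in\sigma(f)\subseteq\sigma(m_1)$ and $(u,v)\in\sigma(g)\subseteq\sigma(m_2)$, I pick paths $p_1$ from $v$ to $w$ with $l(p_1)=m_1$ and $p_2$ from $u$ to $v$ with $l(p_2)=m_2$. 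Concatenating gives a path $p_1p_2$ from $u$ to $w$ with label $m$, so $(u,w)\in\sigma(m)$. Intersecting over all $m\in\supp(fg)$ yields $(u,w)\in\sigma(fg)$, and in particular $fg$ is compatible because $S$ is nonempty (take any $v\in s(f)\cap t(g)$ and use that $f,g$ are already compatible, then project signatures appropriately to see that $S$ is nonempty once $\sigma(f),\sigma(g)$ are).

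For the second part, I assume $f,g\in\R\langle X\rangle_Q$ with $fg\neq 0$. The key observation I would record first is that any path $p$ labelled by a product $m_1m_2$ of words decomposes uniquely as $p=p_1p_2$ with $l(p_1)=m_1$, $l(p_2)=m_2$; hence
\[
 \sigma(m_1m_2)=\{(u,w) : \exists\,v\text{ with }(u,v)\in\sigma(m_2)\text{ and }(v,w)\in\sigma(m_1)\}.
\]
To prove the reverse inclusion $\sigma(fg)\subseteq S$, I would fix $(u,w)\in\sigma(fg)$. Since $fg\neq0$, there exists some $m\in\supp(fg)$, and by the characterization just stated applied to any factorization $m=m_1m_2$ coming from the product, there is an intermediate vertex $v$ with $(u,v)\in\sigma(m_2)$ and $(v,w)\in\sigma(m_1)$. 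Uniform compatibility of $f$ and $g$ gives $\sigma(m_1)=\sigma(f)$ and $\sigma(m_2)=\sigma(g)$, so the witness $v$ lies in $s(f)\cap t(g)$ and certifies $(u,w)\in S$.

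Finally, uniform compatibility of $fg$ follows from the same displayed identity: for any two monomials $m,m'\in\supp(fg)$ with factorizations $m=m_1m_2$ and $m'=m_1'm_2'$ (into support elements of $f$ and $g$), uniform compatibility of $f,g$ gives $\sigma(m_1)=\sigma(m_1')$ and $\sigma(m_2)=\sigma(m_2')$, whence the displayed formula forces $\sigma(m)=\sigma(m')$. The main obstacle I anticipate is organizing the uniqueness of path decomposition carefully enough to treat monomials that may factor in several ways — but uniform compatibility makes all such factorizations yield the same signature sets, so the argument collapses without needing to track individual factorizations.
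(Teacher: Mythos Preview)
Your proposal is correct and follows essentially the same approach as the paper: both arguments hinge on the monomial-level identity $\sigma(m_1m_2)=\{(u,w):\exists\,v,\ (u,v)\in\sigma(m_2),\ (v,w)\in\sigma(m_1)\}$, proved by splitting and concatenating paths at the word-length break point. The paper simply states that it suffices to treat monomials and leaves the passage to general $f,g$ as ``straightforward,'' whereas you spell that passage out explicitly via $\supp(fg)\subseteq\supp(f)\cdot\supp(g)$ and the inclusions $\sigma(f)\subseteq\sigma(m_1)$, $\sigma(g)\subseteq\sigma(m_2)$.
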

\begin{proof}
 It suffices to show the statement for the case of monomials, the general case then follows in a straightforward way.
 Let $m_f, m_g \in \langle{X}\rangle$ be compatible with $Q$, and assume $f=m_f$ and $g=m_g$.\par
 Let $(u,w) \in \sigma(m_fm_g)$, then there exists a path $p$ in $Q$ from $u$ to $w$ with $l(p)=m_fm_g$. This path can be split into two parts $p_1$ and $p_2$ with $l(p_1)=m_f$ and $l(p_2)=m_g$. In particular, $u=s(p_2)$, $s(p_1)=t(p_2)$, and $w=t(p_1)$. Hence, with $v:=s(p_1)$ we have that $(u,v) \in \sigma(m_g)$ and $(v,w) \in \sigma(m_f)$.\par
 Conversely, let $u,v,w \in V$ such that $(u,v) \in \sigma(m_g)$ and $(v,w) \in \sigma(m_f)$. Then, in $Q$, there are paths $p_1$ from $v$ to $w$ and $p_2$ from $u$ to $v$ such that $l(p_1)=m_f$ and $l(p_2)=m_g$. Their concatenation $p_1p_2$ is again a path in $Q$. It has source $u$, target $w$, and label $m_fm_g$. Therefore, $(u,w) \in \sigma(m_fm_g)$.
\end{proof}

\begin{remark}\label{rem:factorization}
 Equality for $\sigma(fg)$ in the above lemma implies that for every $u,w \in V$ such that $fg \in \R\langle{X}\rangle_{u,w}$ there exists $v \in V$ on some path in $Q$ from $u$ to $w$ such that $f \in \R\langle{X}\rangle_{v,w}$ and $g \in \R\langle{X}\rangle_{u,v}$. In particular, this holds if $f$ and $g$ are uniformly compatible with $Q$.
\end{remark}

If the sum or the product of uniformly compatible polynomials are again uniformly compatible, then we have the following converse of Lemma~\ref{lem:addition} and \ref{lem:multiplication}.
\begin{lemma}\label{lem:uniform}
 Let $f,g \in \R\langle{X}\rangle_Q$ be nonzero. If $f+g \in \R\langle{X}\rangle_Q$, then $\sigma(f)=\sigma(g)$. If $fg \in \R\langle{X}\rangle_Q$ and $fg\neq0$, then $s(f) \cap t(g) \neq \emptyset$ and equality for $\sigma(fg)$ holds in the above lemma.
\end{lemma}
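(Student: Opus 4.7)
The plan is to handle the two statements separately, in both cases reducing the analysis to a single representative monomial and then invoking uniform compatibility of $f$ and $g$ individually to pass between signatures of representative monomials and signatures of the polynomials themselves.

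For the claim about $f+g$, I would split into cases according to whether $\supp(f) \cap \supp(g)$ is empty. If some $m$ lies in both supports, then uniform compatibility of $f$ and $g$ yields $\sigma(f) = \sigma(m) = \sigma(g)$ directly, with no use of the hypothesis on $f+g$. Otherwise the supports are disjoint, so no cancellation occurs in $f+g$ and $\supp(f+g) = \supp(f) \cup \supp(g)$, which is nonempty since $f,g \neq 0$. Picking any $m_f \in \supp(f)$ and $m_g \in \supp(g)$, both now lie in $\supp(f+g)$, and uniform compatibility of the sum forces $\sigma(m_f) = \sigma(m_g)$, whence $\sigma(f) = \sigma(g)$.

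For the claim about $fg$, since $fg \neq 0$ I would pick any $m \in \supp(fg)$. The coefficient of $m$ in $fg$ is the sum $\sum_{m_f m_g = m} c_{m_f} d_{m_g}$ over all factorizations, so its nonvanishing guarantees at least one factorization $m = m_f m_g$ with $m_f \in \supp(f)$ and $m_g \in \supp(g)$. Since monomials are trivially uniformly compatible and $m_f m_g = m \neq 0$, Lemma~\ref{lem:multiplication} applies to the pair $m_f, m_g$ and gives equality for $\sigma(m) = \sigma(m_f m_g)$ in the form stated there. Uniform compatibility of $f$ and $g$ lets me replace $\sigma(m_f)$ by $\sigma(f)$, $\sigma(m_g)$ by $\sigma(g)$, $s(m_f)$ by $s(f)$, and $t(m_g)$ by $t(g)$ throughout. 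Nonemptiness of $\sigma(m)$ (since $m$ is compatible) then supplies a witness vertex $v \in s(f) \cap t(g)$, settling the first conclusion. Finally, uniform compatibility of $fg$ gives $\sigma(fg) = \sigma(m)$, and the expression obtained for $\sigma(m)$ is precisely the right-hand side of the inclusion in Lemma~\ref{lem:multiplication}, yielding the desired equality.

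The main subtlety I anticipate is the factorization step in the product case: because of possible cancellation, $\supp(fg)$ is not simply $\{m_f m_g : m_f \in \supp(f),\, m_g \in \supp(g)\}$, yet the argument needs a concrete surviving factorization of the chosen $m$. This is resolved by observing that a nonzero coefficient in the product is, as a sum of coefficient products, nonzero only if at least one summand exists, giving the required representative. Once this bookkeeping is in place, both conclusions are immediate consequences of the monomial case of Lemma~\ref{lem:multiplication} together with the defining equality $\sigma(f) = \sigma(m_f)$, $\sigma(g) = \sigma(m_g)$, and (for the product) $\sigma(fg) = \sigma(m)$.
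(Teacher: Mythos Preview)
The paper states this lemma without proof, introducing it simply as ``the following converse of Lemma~\ref{lem:addition} and \ref{lem:multiplication}''. Your argument is correct and is the natural one: reduce to representative monomials via uniform compatibility, then use the monomial case of Lemma~\ref{lem:multiplication}.

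One minor presentational point in the product case. When you write that ``Lemma~\ref{lem:multiplication} applies to the pair $m_f, m_g$'', that lemma formally carries the hypothesis $s(m_f)\cap t(m_g)\neq\emptyset$, which you only establish two sentences later from nonemptiness of $\sigma(m)$. This is not a genuine gap---the equality description of $\sigma(m_fm_g)$ in the monomial case is proved in the paper without ever using that hypothesis---but the order of the argument reads slightly circular. The cleanest fix is to first observe that $m\in\supp(fg)$ with $fg$ compatible gives $\sigma(m)\supseteq\sigma(fg)\neq\emptyset$, so some path labelled $m=m_fm_g$ exists; splitting it produces a vertex in $s(m_f)\cap t(m_g)=s(f)\cap t(g)$ directly. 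With that in hand, Lemma~\ref{lem:multiplication} applies cleanly and the rest of your argument goes through unchanged.
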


The key observation for our approach, however, is that the set of compatible polynomials is closed under rewriting steps by uniformly compatible polynomials.

\begin{lemma}\label{lem:inclusion}
 Let $f,g,h \in \R\langle{X}\rangle$ such that $f$ is compatible, $g$ is uniformly compatible, and $h$ can be obtained from $f$ by a rewriting step using $g$, i.e.
 \[
  h=f+\lambda agb
 \]
 for some $\lambda \in \R$ and $a,b \in \langle{X}\rangle$. Then, $h$ is compatible, $a,b,agb$ are uniformly compatible, and we have $\sigma(h) \supseteq \sigma(f)$ and $\sigma(agb) \supseteq \sigma(f)$.
 Moreover, if in addition $f$ is uniformly compatible, then also $h$ is uniformly compatible and $\sigma(agb) = \sigma(f)$.
\end{lemma}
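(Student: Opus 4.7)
The plan is to exploit the factorization built into the rewriting step. By definition, there exist monomials $m_f \in \supp(f)$ and $m_g \in \supp(g)$ with $m_f = a m_g b$. Since $f$ is compatible, pick any $(u,w) \in \sigma(f) \subseteq \sigma(m_f) = \sigma(a m_g b)$. A path in $Q$ from $u$ to $w$ with label $a m_g b$ decomposes into three consecutive subpaths whose labels are $b$, $m_g$, $a$ respectively, producing intermediate vertices $v_1,v_2 \in V$ with $(u,v_1) \in \sigma(b)$, $(v_1,v_2) \in \sigma(m_g)$, and $(v_2,w) \in \sigma(a)$. In particular, $a$ and $b$ are compatible; since they are monomials, this already gives that they are uniformly compatible with $Q$.

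Next I would use uniform compatibility of $g$: every $m' \in \supp(g)$ satisfies $\sigma(m') = \sigma(m_g)$, so in particular $(v_1,v_2) \in \sigma(m')$. Concatenating a path for $m'$ with the fixed paths for $a$ and $b$ yields $(u,w) \in \sigma(a m' b)$. Since $\supp(agb) = \{a m' b : m' \in \supp(g)\}$ (as $a,b$ are monomials), this shows $(u,w) \in \sigma(agb)$, establishing $\sigma(agb) \supseteq \sigma(f)$. The same argument, applied symmetrically to any two elements of $\supp(g)$, shows that all monomials in $\supp(agb)$ share a common signature set, so $agb$ is uniformly compatible with $Q$. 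The claim $\sigma(h) \supseteq \sigma(f)$ is trivial when $\lambda=0$; otherwise $\sigma(\lambda agb) = \sigma(agb) \supseteq \sigma(f) \neq \emptyset$, and Lemma~\ref{lem:addition} applied to $f$ and $\lambda agb$ yields the inclusion.

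For the ``moreover'' part, assume $f$ is uniformly compatible. Then $\sigma(m_f) = \sigma(f)$, and because $a m_g b = m_f$ lies in $\supp(agb)$ and $agb$ is uniformly compatible, we get $\sigma(agb) = \sigma(a m_g b) = \sigma(m_f) = \sigma(f)$. Since $\supp(h) \subseteq \supp(f) \cup \supp(agb)$ and every monomial on the right-hand side has signature set $\sigma(f)$, the same holds for every monomial in $\supp(h)$, so $h$ is uniformly compatible.

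I do not expect a serious obstacle: the whole argument is a careful path-concatenation bookkeeping exercise. The one point that must be handled with a little care is the identification $\supp(agb) = \{a m' b : m' \in \supp(g)\}$ with matching coefficients, which relies on the fact that $a$ and $b$ are monomials (so there is no cancellation between different terms $a m' b$); once this is noted, uniform compatibility of $g$ does all the work by letting us freely swap $m_g$ for any other element of $\supp(g)$ inside the path decomposition.
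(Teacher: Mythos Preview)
Your proof is correct and follows essentially the same approach as the paper's. The only difference is cosmetic: where you explicitly decompose and recombine paths to establish that $agb$ is uniformly compatible with $\sigma(agb)=\sigma(am_gb)=\sigma(m_f)$, the paper obtains this in one line by invoking Lemma~\ref{lem:multiplication} (applied to the products $a\cdot g$ and $(ag)\cdot b$), and both arguments then conclude via Lemma~\ref{lem:addition}.
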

\begin{proof}
 Let $m_f \in \supp(f)$ and $m_g \in \supp(g)$ such that $m_f=am_gb$ and $h=f+\lambda agb$.
 Note that $m_f$ and $m_g$ are compatible with $Q$ and satisfy $\sigma(f)\subseteq\sigma(m_f)$ and $\sigma(g)=\sigma(m_g)$, respectively. Since $m_f=am_gb$ is a compatible monomial, its factors $a$ and $b$ evidently are uniformly compatible with $Q$ as well and both $s(a) \cap t(m_g)$ and $s(am_g) \cap t(b)$ are nonempty. Hence, $agb \in \R\langle{X}\rangle_Q$ by Lemma~\ref{lem:multiplication} and we have $\sigma(agb)=\sigma(am_gb)=\sigma(m_f)$.
 Consequently, $\sigma(agb) \supseteq \sigma(f)$ and hence $\sigma(h) \supseteq \sigma(f)$ by Lemma~\ref{lem:addition}.
\end{proof}

\section{$Q$-consequences}
\label{sec:Qconsequences}

A (two-sided) ideal in $\R\langle{X}\rangle$ generated by polynomials that are (uniformly) compatible with $Q$ in general will also contain polynomials that are not compatible with $Q$. This leads to the following definition of polynomials that can be obtained from polynomials that are uniformly compatible with $Q$ by doing computations only inside the set of compatible polynomials.

\begin{definition}
 Let $F \subseteq \R\langle{X}\rangle_Q$. We call $f \in \R\langle{X}\rangle$ a \emph{$Q$-consequence} of $F$, if it is compatible with $Q$ and there are finitely many polynomials $a_i,b_i \in \R\langle{X}\rangle_Q$ and $f_i \in F$ such that $f=\sum_ia_if_ib_i$ where each summand satisfies $\sigma(a_if_ib_i) \supseteq \sigma(f)$.
\end{definition}

Note that the sum $f=\sum_ia_if_ib_i$ in the definition above can also be empty. So, zero is a $Q$-consequence of any $F \subseteq \R\langle{X}\rangle_Q$, even of $F=\emptyset$, as long as $Q$ has at least one vertex.

From the definition, it easily follows that the sum of two uniformly compatible $Q$-consequences of $F$ is again a $Q$-consequence of $F$ if it is uniformly compatible with $Q$. Furthermore, any $Q$-consequence of some $G \subseteq \R\langle{X}\rangle_Q$ evidently is also a $Q$-consequence of any $F \subseteq \R\langle{X}\rangle_Q$ that contains $G$.
More generally, the property of being a $Q$-consequence is transitive, which we will obtain as a consequence of the theorem below. Proving it directly from the definition would be tedious and technical. 

The main result of this section characterizes the $Q$-consequences of a set of uniformly compatible polynomials $F \subseteq \R\langle{X}\rangle_Q$.
Evidently, all $Q$-consequences of $F$ are contained in the ideal $(F)$ and are compatible with $Q$ by definition. We show the surprising fact that also the converse is true, i.e.\ any element of the ideal $(F)$ that is compatible with $Q$ is already a $Q$-consequence of $F$.
This relies on the key observation of Lemma~\ref{lem:inclusion}.

\begin{theorem}
\label{thm:Qconsequences}
 Let $Q$ be a labelled quiver with labels $X$ and let $F \subseteq \R\langle{X}\rangle_Q$. Then, for any $f \in \R\langle{X}\rangle$ the following are equivalent:
 \begin{enumerate}
  \item\label{item:ideal} $f$ is compatible with $Q$ and lies in the ideal generated by $F$.
  \item\label{item:rewriting} $f$ is compatible with $Q$ and can be rewritten to zero using $F$.
  \item\label{item:consequence} $f$ is a $Q$-consequence of $F$.
 \end{enumerate}
\end{theorem}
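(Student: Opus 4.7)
The plan is to prove the triangle $(1) \Leftrightarrow (2)$, $(3) \Rightarrow (1)$, $(2) \Rightarrow (3)$. The equivalence $(1) \Leftrightarrow (2)$ is immediate from Lemma~\ref{lem:idealmembership}, since both sides merely tack on the extra compatibility requirement for $f$. The implication $(3) \Rightarrow (1)$ is also immediate: a $Q$-consequence $f = \sum_i a_i f_i b_i$ with $f_i \in F$ lies in $(F)$ by the very definition of the ideal, and compatibility of $f$ is built into the definition of a $Q$-consequence. So the only substantive direction is $(2) \Rightarrow (3)$.

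For $(2) \Rightarrow (3)$, I would start from a rewriting sequence $f = h_0, h_1, \dots, h_n = 0$ witnessing that $f$ rewrites to zero via $F$, where for each $i$ we have
\[
 h_i = h_{i-1} + \lambda_i a_i g_i b_i
\]
with $\lambda_i \in \R$, $a_i, b_i \in \langle X\rangle$ monomials, $g_i \in F$, and a suitable factorization $a_i m_{g_i} b_i \in \supp(h_{i-1})$. Telescoping gives
\[
 f = h_0 - h_n = -\sum_{i=1}^n \lambda_i a_i g_i b_i,
\]
and the claim is that this sum witnesses $f$ as a $Q$-consequence of $F$, after absorbing $-\lambda_i$ into the monomial factor $a_i$ on the left (scalar multiplication preserves membership in $\R\langle X\rangle_Q$ and does not enlarge the support, hence leaves signatures invariant).

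To verify that each summand satisfies the required conditions, I would apply Lemma~\ref{lem:inclusion} inductively along the sequence, using as base case the hypothesis that $h_0 = f$ is compatible with $Q$ and noting that each $g_i$ is uniformly compatible with $Q$ because $F \subseteq \R\langle X\rangle_Q$. The lemma then delivers three things at step $i$: $h_i$ is compatible with $\sigma(h_i) \supseteq \sigma(h_{i-1})$; the monomials $a_i, b_i$ are uniformly compatible, hence lie in $\R\langle X\rangle_Q$; and $\sigma(a_i g_i b_i) \supseteq \sigma(h_{i-1})$. Chaining the first conclusion gives $\sigma(h_{i-1}) \supseteq \sigma(h_0) = \sigma(f)$ for every $i$, which combined with the third conclusion yields $\sigma(a_i g_i b_i) \supseteq \sigma(f)$, exactly as required by the definition of $Q$-consequence.

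The main obstacle, and the reason the theorem is not merely a restatement of Lemma~\ref{lem:idealmembership}, is the signature condition $\sigma(a_i g_i b_i) \supseteq \sigma(f)$ in the definition of $Q$-consequence: each summand must share a signature with $f$, not merely be compatible in isolation. Lemma~\ref{lem:inclusion} is tailor-made for this purpose, as its conclusion $\sigma(agb) \supseteq \sigma(f)$ for a rewriting step is precisely what propagates the signature of the original polynomial $f$ through the entire rewriting chain. Once this bookkeeping is in place, the proof reduces to a straightforward induction and no further combinatorial work on paths is needed, since all of that was already done in establishing Lemma~\ref{lem:inclusion}.
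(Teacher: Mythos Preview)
Your proof is correct and follows essentially the same route as the paper: both use Lemma~\ref{lem:idealmembership} for $(1)\Leftrightarrow(2)$, note $(3)\Rightarrow(1)$ is immediate from the definition, and prove $(2)\Rightarrow(3)$ by applying Lemma~\ref{lem:inclusion} inductively along a rewriting chain to propagate $\sigma(f)$ into each $\sigma(a_ig_ib_i)$. Your explicit remark about absorbing the scalar $-\lambda_i$ into the cofactor is a small clarification the paper leaves implicit, but otherwise the arguments coincide.
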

\begin{proof}
 By Lemma~\ref{lem:idealmembership}, statements \ref{item:ideal} and \ref{item:rewriting} are equivalent.
 Moreover, we note that statement~\ref{item:consequence} implies statement~\ref{item:ideal} by definition.
 To conclude, we prove statement~\ref{item:consequence} from statement~\ref{item:rewriting}.
 Assume $f \in \R\langle{X}\rangle_Q$ can be rewritten to zero using $F$. This means, there are $h_0,\dots,h_n \in \R\langle{X}\rangle$, with $h_0=f$ and $h_n=0$ such that, for all $i \in \{1,\dots,n\}$, $h_i$ can be obtained from $h_{i-1}$ by a rewriting step using some $g_i \in F$. In particular, this means that there are $\lambda_i \in \R$ and $a_i,b_i \in \langle{X}\rangle$ such that $h_i=h_{i-1}+\lambda_ia_ig_ib_i$. By Lemma~\ref{lem:inclusion}, we infer inductively that all $a_i,b_i,a_ig_ib_i$ are uniformly compatible with $Q$ and satisfy $\sigma(h_i) \supseteq \sigma(h_{i-1}) \supseteq \sigma(f)$ and $\sigma(a_ig_ib_i) \supseteq \sigma(h_{i-1}) \supseteq \sigma(f)$. Therefore, $f=\sum_{i=1}^n(-\lambda_i)a_ig_ib_i$ is a $Q$-consequence of $F$.
\end{proof}

In particular, the above theorem shows that the $Q$-consequences of $F$ are precisely the compatible elements of the ideal generated by $F$.
Moreover, since ideal membership is independent of the quiver, the following statement follows immediately.
\begin{corollary}
\label{cor:consequences}
 Let $F \subseteq \R\langle{X}\rangle$ and $f \in (F)$. Then, for all labelled quivers $Q$ such that all elements of $F$ are uniformly compatible with $Q$ we have that
 \[
  f\text{ is compatible with }Q \quad\Longleftrightarrow\quad f\text{ is a $Q$-consequence of }F.
 \]
\end{corollary}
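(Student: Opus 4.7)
The plan is to derive this statement directly from Theorem~\ref{thm:Qconsequences}, since the corollary merely repackages the equivalence of items~\ref{item:ideal} and~\ref{item:consequence} of that theorem. The hypothesis of the corollary, that every element of $F$ is uniformly compatible with $Q$, is exactly the assumption $F \subseteq \R\langle X\rangle_Q$ under which Theorem~\ref{thm:Qconsequences} was proved, so the theorem applies verbatim to the fixed $F$ and varying $Q$.

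For the forward implication, assume $f \in (F)$ is compatible with $Q$. Then item~\ref{item:ideal} of Theorem~\ref{thm:Qconsequences} is satisfied, the point being that the statement $f \in (F)$ is purely algebraic in $\R\langle X\rangle$ and involves no reference to $Q$, so it transfers unchanged from the hypothesis of the corollary. The theorem then delivers item~\ref{item:consequence}, namely that $f$ is a $Q$-consequence of $F$. For the reverse implication, compatibility with $Q$ is built into the definition of $Q$-consequence; together with the existence of a representation $f = \sum_i a_i f_i b_i$ witnessing that $Q$-consequence, this is item~\ref{item:consequence} of Theorem~\ref{thm:Qconsequences}, which yields item~\ref{item:ideal}, and in particular the compatibility of $f$ with $Q$.

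I expect no genuine obstacle here: all the actual work lies in Theorem~\ref{thm:Qconsequences} and, feeding into it, in the closure property of Lemma~\ref{lem:inclusion}. The only point worth emphasising when writing this up is the conceptual reason the corollary is formulated this way: ideal membership is quiver-independent, so a single proof that $f \in (F)$, obtained for instance by noncommutative Gr\"obner basis techniques, suffices once and for all, and for each labelled quiver $Q$ with respect to which one wishes to interpret the polynomials as operators it remains only to verify compatibility of $f$ with $Q$.
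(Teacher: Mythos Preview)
Your proposal is correct and follows exactly the paper's approach: the corollary is an immediate consequence of the equivalence of items~\ref{item:ideal} and~\ref{item:consequence} in Theorem~\ref{thm:Qconsequences}, combined with the observation that the hypothesis $f\in(F)$ is independent of the quiver. The only minor redundancy is that in the reverse implication you need not pass through the theorem at all, since compatibility with $Q$ is part of the definition of $Q$-consequence; but this does not affect correctness.
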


Furthermore, since $G \subseteq (F)$ implies $(G) \subseteq (F)$, the above theorem also proves that the notion of $Q$-consequences is transitive in the following sense.

\begin{corollary}
 Let $Q$ be a labelled quiver with labels $X$ and let $F \subseteq \R\langle{X}\rangle_Q$. If $G \subseteq \R\langle{X}\rangle_Q$ is a set of $Q$-consequences of $F$, then any $Q$-consequence of $G$ is also a $Q$-consequence of $F$.
\end{corollary}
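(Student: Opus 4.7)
The plan is to deduce this corollary essentially for free from Theorem~\ref{thm:Qconsequences}, using the equivalence between being a $Q$-consequence of a uniformly compatible set and being a compatible element of the ideal generated by that set. The key insight, already flagged in the sentence preceding the corollary, is that $Q$-consequences are characterized purely by ideal membership plus compatibility, and ideal membership is transitive in the ordinary algebraic sense.

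First I would observe that, by the implication \ref{item:consequence}$\Rightarrow$\ref{item:ideal} of Theorem~\ref{thm:Qconsequences}, every $g \in G$ lies in the ideal $(F)$, because $g$ is assumed to be a $Q$-consequence of $F$. Standard ideal theory in $\R\langle{X}\rangle$ then gives $G \subseteq (F)$, hence $(G) \subseteq (F)$ (any two-sided ideal containing $G$ automatically contains the smallest such ideal).

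Next, let $f$ be a $Q$-consequence of $G$. By definition $f$ is compatible with $Q$, and by \ref{item:consequence}$\Rightarrow$\ref{item:ideal} of Theorem~\ref{thm:Qconsequences} applied to $G$ (which is a subset of $\R\langle{X}\rangle_Q$ by assumption), we have $f \in (G)$. Combining this with the inclusion $(G) \subseteq (F)$ from the previous step yields $f \in (F)$, while compatibility of $f$ with $Q$ is preserved. Now I invoke the reverse implication \ref{item:ideal}$\Rightarrow$\ref{item:consequence} of Theorem~\ref{thm:Qconsequences}, this time applied to $F$, to conclude that $f$ is a $Q$-consequence of $F$, which is exactly the claim.

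There is no real obstacle here: the nontrivial content has been absorbed into Theorem~\ref{thm:Qconsequences}, whose proof in turn rests on Lemma~\ref{lem:inclusion}. The only thing to be slightly careful about is to verify the hypotheses needed for both directions of the theorem: in both applications, $F$ and $G$ are assumed to lie in $\R\langle{X}\rangle_Q$, which is precisely the setting in which Theorem~\ref{thm:Qconsequences} is stated, so no additional checks are required.
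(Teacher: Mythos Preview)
Your argument is correct and matches the paper's own reasoning essentially verbatim: the paper deduces the corollary directly from Theorem~\ref{thm:Qconsequences} via the inclusion $(G)\subseteq(F)$, exactly as you do. There is nothing to add.
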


\section{Paradigm for proving identities of operators}
\label{sec:RlinearOperators}

Now, we come back to the main question: if we have an ideal generated by some set of polynomials $F \subseteq \R\langle{X}\rangle$ along with an element $f \in (F)$ of that ideal, what does that prove when $F$ and $f$ are interpreted in the context of operators?
Based on the framework developed in the previous sections, in particular Theorem~\ref{thm:Qconsequences}, Theorem~\ref{thm:Rlinear} below gives the answer for a very general notion of operators.
First, we present the more concrete case of operators being viewed as linear maps between vector spaces over some field $K$. For appropriate fields $K$, Theorem~\ref{thm:Klinear} already covers many interesting cases such as real or complex matrices or bounded linear maps between Hilbert spaces, for example.
The more general notion of operators considered later uses the more abstract language of $\R$-linear categories. Beyond what can be viewed as linear maps between vector spaces, it allows to treat also matrices over rings, homomorphisms of modules, or elements of an algebra.

\subsection{Linear maps between vector spaces}

For a quiver $(V,E,s,t)$ and a field $K$, $(\mathcal{V},\varphi)$ is called a \emph{representation} of the quiver $(V,E,s,t)$, if $\mathcal{V}=(\mathcal{V}_v)_{v \in V}$ is a family of $K$-vector spaces and $\varphi$ is a map that assigns to each $e \in E$ a $K$-linear map $\varphi(e): \mathcal{V}_{s(e)} \to \mathcal{V}_{t(e)}$, see e.g.\ \cite{DerksenWeyman2005}.
Note that any nonempty path $e_n{\dots}e_1$ in the quiver induces a $K$-linear map $\varphi(e_n){\cdot}{\dots}{\cdot}\varphi(e_1)$, since the maps $\varphi(e_{i+1})$ and $\varphi(e_i)$ can be composed for every $i \in \{1,\dots,n-1\}$ by definition of $\varphi$. Similarly, for every $v \in V$, the empty path $\epsilon_v$ induces the identity map on $\mathcal{V}_v$.

\begin{remark}\label{rem:consistencyUnique}
 If, as in Theorem~\ref{thm:KlinearUnique}, edges have unique labels and only polynomials without constant term are considered, then every compatible polynomial has a unique realization as a $K$-linear map and every representation of the quiver is consistent with its labelling as defined below.
\end{remark}

\begin{definition}
 Let $K$ be a field and let $Q$ be a labelled quiver with labelling $l$. We call a representation $(\mathcal{V},\varphi)$ of $Q$ \emph{consistent} with the labelling $l$ if for any two nonempty paths $p=e_n{\dots}e_1$ and $q=d_n{\dots}d_1$ in $Q$ with the same source and target, equality of labels $l(p)=l(q)$ implies $\varphi(e_n){\cdot}{\dots}{\cdot}\varphi(e_1) = \varphi(d_n){\cdot}{\dots}{\cdot}\varphi(d_1)$ as $K$-linear maps.
\end{definition}

\begin{remark}\label{rem:consistency}
 Trivially, if there are no distinct paths with the same source, traget, and label, then every representation of that labelled quiver is consistent with its labelling. In practice, this is usually fulfilled by quivers originating from statements about operators. The following sufficient conditions can be verified without the need for looking at paths or linear maps.
 If for every vertex $v \in V$ all outgoing edges, i.e.\ $e \in E$ with $s(e)=v$, have distinct labels, then all paths with the same source have distinct labels. Likewise, if for every vertex all incoming edges have distinct labels, then all paths with the same target have distinct labels. 
\end{remark}

For the next definition and lemma, we fix a field $K$, a labelled quiver $Q=(V,E,X,s,t,l)$, and a consistent representation $\mathcal{R}=(\mathcal{V},\varphi)$ of $Q$.
Now, we formalize the intuitive concept of plugging in the $K$-linear maps $\varphi(e)$, $e \in E$, in place of the indeterminates $l(e)$ of polynomials in $K\langle{X}\rangle$.
In general, of course, this can only be done if all monomials in the support of the given polynomial are labels of paths in $Q$ with the same source $v \in V$ and with the same target $w \in V$.
In other words, the polynomials have to lie in the $K$-vector space $K\langle{X}\rangle_{v,w}$, see Definition~\ref{def:module}.
As a result of plugging in linear maps for indeterminates, we obtain an element in $L(\mathcal{V}_v,\mathcal{V}_w)$, the set of $K$-linear maps from $\mathcal{V}_v$ to $\mathcal{V}_w$.
Note that, for fixed source and target, the induced map is independent of the paths chosen, since the representation $\mathcal{R}$ is consistent with the labelling $l$. Consequently, the map $\varphi_{v,w}$ defined below is well-defined.

\begin{definition}
 For $v,w \in V$, we define the $K$-linear map $\varphi_{v,w} : K\langle{X}\rangle_{v,w} \to L(\mathcal{V}_v,\mathcal{V}_w)$ by
 \[
  \varphi_{v,w}(l(e_n{\dots}e_1)) := \varphi(e_n){\cdot}{\dots}{\cdot}\varphi(e_1)
 \]
 for all nonempty paths $e_n{\dots}e_1$ in $Q$ from $v$ to $w$ and, if $v=w$, also by $\varphi_{v,v}(1) := \id_{\mathcal{V}_v}$.
 For all $f \in K\langle{X}\rangle_{v,w}$, we call the $K$-linear map $\varphi_{v,w}(f)$ a \emph{realization} of $f$ w.r.t.\ the representation $\mathcal{R}$ of $Q$.
\end{definition}

Note that, a polynomial is compatible if and only if it has at least one realization as a $K$-linear map.
For any $v,w \in V$, the zero map from $\mathcal{V}_v$ to $\mathcal{V}_w$ is a realization of $0 \in K\langle{X}\rangle$.

\begin{example}
\label{ex:realizations}
 The following three diagrams show a quiver along with a labelling and a representation of it, which can be used to describe the situation of the introductory example. The representation is trivially consistent with the labelling since edges have unique labels, see Remark~\ref{rem:consistency}. We assume $\mathcal{V}_v$ and $\mathcal{V}_w$ are $K$-vector spaces as well as $A \in L(\mathcal{V}_v,\mathcal{V}_w)$ and $A^-,Y \in L(\mathcal{V}_w,\mathcal{V}_v)$.
\begin{center}
\begin{tikzpicture}
\begin{scope}[xshift=-4.5cm]
 \matrix (m) [matrix of math nodes, column sep=2cm]
  {v & w \\};
 \path[->] (m-1-1) edge [bend left] node [auto] {$e_1$} (m-1-2);
 \path[->] (m-1-2) edge [bend left] node [auto] {$e_2$} (m-1-1);
 \path[->] (m-1-2) edge [bend left=15] node [auto, swap] {$e_3$} (m-1-1);
\end{scope}
\begin{scope}
 \matrix (m) [matrix of math nodes, column sep=2cm]
  {\bullet & \bullet \\};
 \path[->] (m-1-1) edge [bend left] node [auto] {$a$} (m-1-2);
 \path[->] (m-1-2) edge [bend left] node [auto] {$a^-$} (m-1-1);
 \path[->] (m-1-2) edge [bend left=15] node [auto, swap] {$y$} (m-1-1);
\end{scope}
\begin{scope}[xshift=4.5cm]
 \matrix (m) [matrix of math nodes, column sep=2cm]
  {\mathcal{V}_v & \mathcal{V}_w \\};
 \path[->] (m-1-1) edge [bend left] node [auto] {$A$} (m-1-2);
 \path[->] (m-1-2) edge [bend left] node [auto] {$A^-$} (m-1-1);
 \path[->] (m-1-2) edge [bend left=15] node [auto, swap] {$Y$} (m-1-1);
\end{scope}
\end{tikzpicture}
\end{center}
 Then, the polynomials $g = aa^-a-a$ and $f = a(a^-+y-yaa^-)a-a$ lie in $K\langle{X}\rangle_{v,w}$. So, in $L(\mathcal{V}_v,\mathcal{V}_w)$, these polynomials have the realizations
 \[
  \varphi_{v,w}(g) = \varphi_{v,w}(l(e_1e_2e_1))-\varphi_{v,w}(l(e_1)) = \varphi(e_1){\cdot}\varphi(e_2){\cdot}\varphi(e_1)-\varphi(e_1) = AA^-A-A
 \]
 and similiarly
 \begin{align*}
  \varphi_{v,w}(f) &= \varphi_{v,w}(l(e_1e_2e_1))+\varphi_{v,w}(l(e_1e_3e_1))-\varphi_{v,w}(l(e_1e_3e_1e_2e_1))-\varphi_{v,w}(l(e_1))\\
   &= AA^-A+AYA-AYAA^-A-A.
 \end{align*}
 As discussed before, the same polynomials can also be used for other situations. In particular, we can consider quivers where edges can have the same label. The following labelled quiver has this property and has been mentioned before already. It is obtained by adding a vertex and an edge to the above quiver and labelling the new edge with $a$ as well. With $A_1 \in L(\mathcal{V}_u,\mathcal{V}_w)$ and $A_2 \in L(\mathcal{V}_v,\mathcal{V}_w)$ for some $K$-vector space $\mathcal{V}_u$, the following representation of it is consistent with the labelling, since all edges with the same source have distinct labels, c.f.\ Remark~\ref{rem:consistency}.
\begin{center}
\begin{tikzpicture}
\begin{scope}
 \matrix (m) [matrix of math nodes, column sep=2cm]
  {v & w & u\\};
 \path[->] (m-1-3) edge node [auto] {$e_4,a$} (m-1-2);
 \path[->] (m-1-1) edge [bend left] node [auto] {$e_1,a$} (m-1-2);
 \path[->] (m-1-2) edge [bend left] node [auto] {$e_2,a^-$} (m-1-1);
 \path[->] (m-1-2) edge [bend left=15] node [auto, swap] {$e_3,y$} (m-1-1);
\end{scope}
\begin{scope}[xshift=7.5cm]
 \matrix (m) [matrix of math nodes, column sep=2cm]
  {\mathcal{V}_v & \mathcal{V}_w & \mathcal{V}_u\\};
 \path[->] (m-1-3) edge node [auto] {$A_1$} (m-1-2);
 \path[->] (m-1-1) edge [bend left] node [auto] {$A_2$} (m-1-2);
 \path[->] (m-1-2) edge [bend left] node [auto] {$A^-$} (m-1-1);
 \path[->] (m-1-2) edge [bend left=15] node [auto, swap] {$Y$} (m-1-1);
\end{scope}
\end{tikzpicture}
\end{center}
 Now, the polynomials $f$ and $g$ lie not only in $K\langle{X}\rangle_{v,w}$ but also in $K\langle{X}\rangle_{u,w}$. So, each of them has realizations in $L(\mathcal{V}_v,\mathcal{V}_w)$ and in $L(\mathcal{V}_u,\mathcal{V}_w)$. For instance, the realizations of $g$ are given by:
 \begin{align*}
  \varphi_{u,w}(g) &= \varphi_{u,w}(l(e_1e_2e_4))-\varphi_{u,w}(l(e_4)) = A_2A^-A_1-A_1\\
  \varphi_{v,w}(g) &= \varphi_{v,w}(l(e_1e_2e_1))-\varphi_{v,w}(l(e_1)) = A_2A^-A_2-A_2.
 \end{align*}
\end{example}

By definition, for any $K$-linear combination of compatible polynomials $f_i$ that share a signature $(v,w)$, we have that the realization of the linear combination in $L(\mathcal{V}_v,\mathcal{V}_w)$ is the corresponding linear combination of realizations of the $f_i$.
Similarly, for the product of compatible polynomials with matching source and target, we have that the corresponding realization of the product is the composition of realizations of the factors, for the precise statement see the lemma below.
Altogether, it follows from Lemma~\ref{lem:uniform} that computations inside the set $K\langle{X}\rangle_Q$ of uniformly compatible polynomials can also be done with their realizations.

\begin{lemma}\label{lem:productrealizationsK}
 Let $u,v,w \in V$. Then, for all $f \in K\langle{X}\rangle_{v,w}$ and $g \in K\langle{X}\rangle_{u,v}$, we have that $fg \in K\langle{X}\rangle_{u,w}$ and
 \[
  \varphi_{u,w}(fg) = \varphi_{v,w}(f){\cdot}\varphi_{u,v}(g).
 \]
\end{lemma}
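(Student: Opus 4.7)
The plan is to first establish compatibility of $fg$, and then verify the equality of realizations by reducing to the monomial case via $K$-bilinearity.

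For compatibility: since $f \in K\langle{X}\rangle_{v,w}$ we have $(v,w) \in \sigma(f)$, and since $g \in K\langle{X}\rangle_{u,v}$ we have $(u,v) \in \sigma(g)$. In particular $v \in s(f) \cap t(g)$, so Lemma~\ref{lem:multiplication} applies and yields that $fg$ is compatible with $Q$ and $(u,w) \in \sigma(fg)$, i.e.\ $fg \in K\langle{X}\rangle_{u,w}$. This also guarantees that the right-hand side of the claimed equality makes sense.

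For the equality of realizations, I would use the observation noted in the paper that every element of $K\langle{X}\rangle_{v,w}$ can be written as $f = \sum_i c_i l(p_i)$ where the $p_i$ are paths in $Q$ from $v$ to $w$, and analogously $g = \sum_j d_j l(q_j)$ with $q_j$ paths from $u$ to $v$. Since $\varphi_{u,w}$, $\varphi_{v,w}$, $\varphi_{u,v}$ are $K$-linear and composition of $K$-linear maps is $K$-bilinear, it suffices to verify
\[
 \varphi_{u,w}(l(p_i)\, l(q_j)) = \varphi_{v,w}(l(p_i)) \cdot \varphi_{u,v}(l(q_j))
\]
for each pair $(i,j)$. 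Because $t(q_j) = v = s(p_i)$, the concatenation $p_i q_j$ is a genuine path in $Q$ from $u$ to $w$ with $l(p_i q_j) = l(p_i)\, l(q_j)$, so the left-hand side is the composition of $\varphi$ along the edges of $p_i q_j$, which splits, by associativity of composition, as the composition along $p_i$ applied after the composition along $q_j$; by the defining equation of $\varphi_{v,w}$ and $\varphi_{u,v}$, this is exactly the right-hand side.

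The only genuinely fiddly point, and what I would call the main obstacle, is book-keeping for empty paths: if $v = w$ and some $p_i$ is the empty path $\epsilon_v$ (so $l(p_i) = 1$), then the corresponding factor $\varphi_{v,v}(1) = \id_{\mathcal{V}_v}$ must be inserted, and likewise for empty $q_j$ with $u = v$; in either situation $p_i q_j$ degenerates to the non-trivial half, $l(p_i q_j)$ is still $l(p_i) l(q_j)$, and the identity-map factor disappears on both sides, so the equation still holds. Well-definedness of all three $\varphi_{\cdot,\cdot}$ on their respective modules, and hence the fact that the computation does not depend on the particular choice of paths representing $f$ and $g$, is guaranteed by consistency of $\mathcal{R}$ with the labelling $l$, so no further argument is needed there.
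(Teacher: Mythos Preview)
Your proof is correct and follows essentially the same approach as the paper: reduce to the monomial case via $K$-bilinearity of composition and $K$-linearity of the $\varphi_{\cdot,\cdot}$, then handle monomials by concatenating the underlying paths, with a brief remark on the empty-path case. The only cosmetic difference is that you establish $fg \in K\langle X\rangle_{u,w}$ up front via Lemma~\ref{lem:multiplication}, whereas the paper simply notes it is obvious once one has reduced to monomials.
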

\begin{proof}
 Since the three maps $\varphi_{u,w},\varphi_{v,w},\varphi_{u,v}$ are $K$-linear and composition of linear maps $L(\mathcal{V}_v,\mathcal{V}_w) \times L(\mathcal{V}_u,\mathcal{V}_v) \to L(\mathcal{V}_u,\mathcal{V}_w)$ is $K$-bilinear, it suffices to show the statement for $f$ and $g$ being monomials.
 Let $p=e_n{\dots}e_{k+1}$ be a nonempty path in $Q$ with source $v$ and target $w$ and $f=l(p)$. Likewise, let $q=e_k{\dots}e_1$ be a nonempty path in $Q$ with source $u$ and target $v$ and $g=l(q)$. Then, $fg \in K\langle{X}\rangle_{u,w}$ obviously holds. Moreover,
 \[
  \varphi_{u,w}(fg) = \varphi_{u,w}(l(pq)) = \varphi(e_n){\cdot}{\dots}{\cdot}\varphi(e_{k+1}){\cdot}\varphi(e_k){\cdot}{\dots}{\cdot}\varphi(e_1) = \varphi_{v,w}(f){\cdot}\varphi_{u,v}(g).
 \]
 If $p$ or $q$ are empty, then $\varphi_{u,w}(l(pq)) = \varphi_{v,w}(f){\cdot}\varphi_{u,v}(g)$ is trivially true.
\end{proof}

Recall that $Q$-consequences have representations for which every intermediate expression is compatible with $Q$ and satisfies additional conditions. These representations can be carried over to all realizations of the $Q$-consequence by the properties of (uniformly) compatible polynomials and their realizations. Therefore, we obtain the following theorem, since every compatible polynomial in the ideal is a $Q$-consequence of the generators by Theorem~\ref{thm:Qconsequences}.

\begin{theorem}\label{thm:Klinear}
 Let $\R=K$ be a field and let $F \subseteq K\langle{X}\rangle$ and $f \in (F)$. Moreover, let $Q=(V,E,X,s,t,l)$ be a labelled quiver such that $f$ is compatible and all elements of $F$ are uniformly compatible with $Q$. Then, for all consistent representations $\mathcal{R}=(\mathcal{V},\varphi)$ of $Q$ such that every realization of any element of $F$ w.r.t.\ $\mathcal{R}$ is zero, we have that every realization of $f$ w.r.t.\ $\mathcal{R}$ is zero.
\end{theorem}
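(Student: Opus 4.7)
The plan is to reduce the statement directly to the characterization of $Q$-consequences obtained in Theorem~\ref{thm:Qconsequences}, combined with the multiplicativity of realizations from Lemma~\ref{lem:productrealizationsK}. Since $f \in (F)$ and $f$ is compatible with $Q$, Theorem~\ref{thm:Qconsequences} (applied with $\R = K$) supplies a presentation
\[
 f = \sum_{i=1}^n a_i f_i b_i, \qquad a_i, b_i \in K\langle{X}\rangle_Q, \quad f_i \in F,
\]
with the additional property that $\sigma(a_i f_i b_i) \supseteq \sigma(f)$ for every $i$. I would then fix an arbitrary signature $(v,w) \in \sigma(f)$, so that the claim becomes $\varphi_{v,w}(f) = 0$, and try to show that every summand $\varphi_{v,w}(a_i f_i b_i)$ vanishes.

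The key bookkeeping step is to promote $(v,w) \in \sigma(a_i f_i b_i)$ to a factorization through intermediate vertices compatible with each factor. Because $a_i$, $f_i$, $b_i$ are uniformly compatible and their product is compatible with nonempty signature, two applications of Lemma~\ref{lem:multiplication} (or equivalently Remark~\ref{rem:factorization} applied twice) yield vertices $u_i, v_i \in V$ such that
\[
 b_i \in K\langle{X}\rangle_{v,u_i}, \quad f_i \in K\langle{X}\rangle_{u_i,v_i}, \quad a_i \in K\langle{X}\rangle_{v_i,w}.
\]
Lemma~\ref{lem:productrealizationsK}, applied twice, then gives the identity
\[
 \varphi_{v,w}(a_i f_i b_i) = \varphi_{v_i,w}(a_i) \cdot \varphi_{u_i,v_i}(f_i) \cdot \varphi_{v,u_i}(b_i)
\]
in $L(\mathcal{V}_v, \mathcal{V}_w)$. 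By hypothesis, \emph{every} realization of each $f_i$ vanishes, so in particular the middle factor $\varphi_{u_i,v_i}(f_i)$ is zero and each summand is zero. The $K$-linearity of $\varphi_{v,w}$ (which is part of its definition and ensured by consistency of $\mathcal{R}$) then gives $\varphi_{v,w}(f) = \sum_i \varphi_{v,w}(a_i f_i b_i) = 0$. Since $(v,w) \in \sigma(f)$ was arbitrary, every realization of $f$ w.r.t.\ $\mathcal{R}$ vanishes.

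The conceptually nontrivial input is Theorem~\ref{thm:Qconsequences}, without which the assumed representation $f = \sum_i a_i f_i b_i$ coming from $f \in (F)$ need not have compatible intermediate expressions, and hence no well-defined signature-by-signature decomposition through the $\varphi_{\cdot,\cdot}$ maps. The only step that requires genuine care in the plan above is the signature-tracking in the second paragraph: one must verify that the factorization vertices $u_i, v_i$ can be chosen so that \emph{each} factor lies in the appropriate $K\langle{X}\rangle_{\cdot,\cdot}$ and so that the composition in $L(\mathcal{V}_v,\mathcal{V}_w)$ matches the realization of the product. Once that is cleanly set up, the vanishing of the realization is immediate from the assumption on $F$ and linearity.
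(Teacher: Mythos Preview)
Your proposal is correct and follows essentially the same approach as the paper's own proof: invoke Theorem~\ref{thm:Qconsequences} to obtain a $Q$-consequence presentation of $f$, fix a signature $(v,w)\in\sigma(f)$, factor each summand through intermediate vertices via Lemma~\ref{lem:multiplication}/Remark~\ref{rem:factorization}, and conclude using Lemma~\ref{lem:productrealizationsK} and linearity. The only small refinement in the paper is that it explicitly discards zero summands $a_if_ib_i$ before invoking Remark~\ref{rem:factorization} (whose hypothesis requires the product to be nonzero); this is harmless since zero summands contribute zero realizations anyway, and you already flagged this factorization step as the one needing care.
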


\begin{proof}
 If $f=0$, then the statement trivially holds, so we assume $f\neq0$.
 Let $\varphi_{v,w}(f)$ be a realization of $f$, then $(v,w) \in \sigma(f)$ and $\varphi_{v,w}(f) \in L(\mathcal{V}_v,\mathcal{V}_w)$.
 Since $f$ is compatible with $Q$ and $F \subseteq K\langle{X}\rangle_Q$, $f$ is a $Q$-consequence of $F$ by Theorem~\ref{thm:Qconsequences}.
 This means, there are finitely many polynomials $a_i,b_i \in K\langle{X}\rangle_Q$ and $f_i \in F$ such that $f=\sum_{i=1}^na_if_ib_i$ where each summand satisfies $\sigma(a_if_ib_i) \supseteq \sigma(f)$. In particular, all summands $a_if_ib_i$ are contained in $K\langle{X}\rangle_{v,w}$. Without loss of generality, we assume that all summands $a_if_ib_i$ are nonzero.
 For every $i \in \{1,\dots,n\}$, by applying Lemma~\ref{lem:multiplication} the product $a_if_i$ is uniformly compatible. Hence, by applying Lemma~\ref{lem:multiplication} and Remark~\ref{rem:factorization} twice, there are $v_i,w_i \in V$ such that $a_i \in K\langle{X}\rangle_{w_i,w}$, $f_i \in K\langle{X}\rangle_{v_i,w_i}$, and $b_i \in K\langle{X}\rangle_{v,v_i}$. Then, by Lemma~\ref{lem:productrealizationsK}, $\varphi_{v,w}(a_if_ib_i) = \varphi_{w_i,w}(a_i){\cdot}\varphi_{v_i,w_i}(f_i){\cdot}\varphi_{v,v_i}(b_i)$.
 Consequently, we compute
 \[
  \varphi_{v,w}(f) = \sum_{i=1}^n\varphi_{v,w}(a_if_ib_i) = \sum_{i=1}^n\varphi_{w_i,w}(a_i){\cdot}\varphi_{v_i,w_i}(f_i){\cdot}\varphi_{v,v_i}(b_i)
 \]
 by additivity of $\varphi_{v,w}$. Since $\varphi_{v_i,w_i}(f_i)$ is zero for all $i \in \{1,\dots,n\}$ by assumption, we conclude that $\varphi_{v,w}(f)$ is zero as well.
\end{proof}

\begin{example}
\label{ex:theorem}
 We illustrate how the above theorem can be used to prove the following statement mentioned in the introduction: given $A \in L(\mathcal{V}_v,\mathcal{V}_w)$ and an inner inverse $A^- \in L(\mathcal{V}_w,\mathcal{V}_v)$, then also $A^-+Y-YAA^-$ is an inner inverse of $A$ for any $Y \in L(\mathcal{V}_w,\mathcal{V}_v)$.
 It already has been checked in the introduction that the polynomial $f = a(a^-+y-yaa^-)a-a$ lies in the ideal generated by $g = aa^-a-a$, since we have the representation $f = (1-ay)g$. We take the quiver and its consistent representation described in the first part of Example~\ref{ex:realizations}. Uniform compatibility of $f$ and $g$ with the quiver has been checked in Example~\ref{ex:compatible}. Since $A^-$ is an inner inverse of $A$ by assumption, the realization $AA^-A-A$ of $g$ computed in Example~\ref{ex:realizations} is zero.
 Then, Theorem~\ref{thm:Klinear} implies that the realization $A(A^-+Y-YAA^-)A-A$ of $f$ is zero. Hence, $A^-+Y-YAA^-$ is an inner inverse of $A$.
 Note that, using other representations or other quivers, analogous or more general statements can be proven based on the single formal computation $f = (1-ay)g$.
\end{example}

In order to apply our main theorem, the assumptions on the operators involved need to be expressed as identities of operators first. It is not always immediately obvious how to do so and may involve additional operators, as the following example shows.
The example also explicitly shows how analogous results are obtained, without redoing the computation, just by changing the representation of the quiver.

\begin{example}
\label{ex:WoodburyInner}
 In \cite{HendersonSearle1981}, related to the Sherman-Morrison-Woodbury formula, several formulae for the inverse of the sum of two matrices are reviewed. Also a few formulae for inner inverses are given there, of which we treat one now.
 Let $A \in \mathbb{C}^{m\times{n}}$, $B \in \mathbb{C}^{p\times{q}}$, $Y \in \mathbb{C}^{m\times{p}}$, and $Z \in \mathbb{C}^{q\times{n}}$ such that the column space and the row space of $YBZ$ are contained in the column space and row space of $A$, respectively. If $A^- \in \mathbb{C}^{n\times{m}}$ and $S^- \in \mathbb{C}^{q\times{p}}$ are inner inverses of $A$ and $S:=B+BZA^-YB$, respectively, then the claim is that
 \[
  A^--A^-YBS^-BZA^-
 \]
 is an inner inverse of $A+YBZ$.
 With the aid of $A^-$, the assumptions on the spaces of $YBZ$ can be expressed as identities $AA^-YBZ=YBZ$ and $YBZA^-A=YBZ$.
 In $\mathbb{Z}\langle{a,a^-,b,s^-,y,z}\rangle$, the assumptions are represented by the following polynomials.
 \begin{gather*}
  f_1 = aa^-a-a, \quad\quad f_2 = aa^-ybz-ybz, \quad\quad f_3 = ybza^-a-ybz,\\
  f_4 = (b+bza^-yb)s^-(b+bza^-yb)-(b+bza^-yb)
 \end{gather*}
 It can be checked that the polynomial
 \[
  f = (a+ybz)(a^--a^-ybs^-bza^-)(a+ybz)-(a+ybz),
 \]
 which represents the claim, can be expressed in terms of $f_1,f_2,f_3,f_4$ as
 \begin{multline*}
  f = f_1 + f_2(1+(a^-ybs^-bz-1)a^-(a+ybz))\\
   + aa^-y((1+bza^-y)bs^--1)bza^-f_3 - aa^-yf_4za^-a.
 \end{multline*}
 Since this expression uses only integer coefficients, we have that $f \in (f_1,f_2,f_3,f_4)$ in $K\langle{a,a^-,b,s^-,y,z}\rangle$ for any field $K$, in particular $K=\mathbb{C}$.
 Furthermore, it can be checked that the polynomials $f_1,f_2,f_3,f_4,f$ are uniformly compatible with the labelled quiver shown below on the left. Since the labelling of the quiver does not label different edges the same way, any representation of the quiver is consistent with the labelling, see Remark~\ref{rem:consistency}. If we choose the representation shown below on the right with $\mathcal{V}_1=\mathbb{C}^n$, $\mathcal{V}_2=\mathbb{C}^m$, $\mathcal{V}_3=\mathbb{C}^q$, and $\mathcal{V}_4=\mathbb{C}^p$, then Theorem~\ref{thm:Klinear} allows us to conclude that under the assumptions above every realization of the polynomial $f$ w.r.t.\ this representation of the quiver is zero, i.e.\ $A^--A^-YBS^-BZA^-$ is indeed an inner inverse of $A+YBZ$.
\begin{center}
\begin{tikzpicture}
\begin{scope}
 \matrix (m) [matrix of math nodes, column sep=2cm, row sep=1.5cm]
  {\bullet & \bullet \\
  \bullet & \bullet\\};
 \path[->] (m-1-1) edge [bend left] node [auto] {$a$} (m-1-2);
 \path[->] (m-1-2) edge [bend left] node [auto, swap] {$a^-$} (m-1-1);
 \path[->] (m-2-1) edge [bend left] node [auto] {$b$} (m-2-2);
 \path[->] (m-2-2) edge node [auto, swap] {$y$} (m-1-2);
 \path[->] (m-1-1) edge node [auto, swap] {$z$} (m-2-1);
 \path[->] (m-2-2) edge [bend left] node [auto, swap] {$s^-$} (m-2-1);
\end{scope}
\begin{scope}[xshift=6cm]
 \matrix (m) [matrix of math nodes, column sep=2cm, row sep=1.5cm]
  {\mathcal{V}_1 & \mathcal{V}_2 \\
  \mathcal{V}_3 & \mathcal{V}_4\\};
 \path[->] (m-1-1) edge [bend left] node [auto] {$A$} (m-1-2);
 \path[->] (m-1-2) edge [bend left] node [auto, swap] {$A^-$} (m-1-1);
 \path[->] (m-2-1) edge [bend left] node [auto] {$B$} (m-2-2);
 \path[->] (m-2-2) edge node [auto, swap] {$Y$} (m-1-2);
 \path[->] (m-1-1) edge node [auto, swap] {$Z$} (m-2-1);
 \path[->] (m-2-2) edge [bend left] node [auto, swap] {$S^-$} (m-2-1);
\end{scope}
\end{tikzpicture}
\end{center}
 Choosing other representations of this quiver also allows us to prove analogous statements in other contexts. For example, let $\mathcal{V}_1,\mathcal{V}_2,\mathcal{V}_3,\mathcal{V}_4$ be arbitrary Banach spaces and let $A,A^-,B,S^-,Y,Z$ be bounded linear operators between them such that $\im(YBZ) \subseteq \im(A)$, $\ker(YBZ) \supseteq \ker(A)$, and $A^-$ resp.\ $S^-$ are inner inverses of $A$ resp.\ $S:=B+BZA^-YB$. Note that, because of $AA^-A=A$, the conditions on range and kernel of $YBZ$ are equivalent to the identities $AA^-YBZ=YBZ$ and $YBZA^-A=YBZ$, respectively. Then, based on ideal membership and compatibility checked above, Theorem~\ref{thm:Klinear} implies that the bounded linear operator $A^--A^-YBS^-BZA^-$ is an inner inverse of $A+YBZ$.
\end{example}

If a statement involves several versions of the same operator, our framework allows to assign the same label to the corresponding edges of the quiver provided that these versions of the operator satisfy the same identities. This is because the polynomials representing the assumptions have to be uniformly compatible with the quiver.
For instance, differentiation and integration can act on various classes of functions, where they always satisfy the fundamental theorem of calculus.
The following example illustrates this using a solution formula for reducible second-order linear ordinary differential equations.

\begin{example}
\label{ex:ODEfact}
 Consider the inhomogeneous linear differential equation
 \[
  y^{\prime\prime}(x)+A_1(x)y^\prime(x)+A_0(x)y(x)=r(x)
 \]
 and assume that it can be decomposed into the two first-order equations
 \[
  y^\prime(x)-B_2(x)y(x)=z(x) \quad\text{and}\quad z^\prime(x)-B_1(x)z(x)=r(x).
 \]
 We use our framework to show that a particular solution is given by the nested integral
 \[
  y(x) = H_2(x)\int_{x_2}^xH_2(t)^{-1}H_1(t)\int_{x_1}^tH_1(u)^{-1}r(u)\,du\,dt,
 \]
 where $H_i(x)$ is a solution of $y^\prime(x)-B_i(x)y(x)=0$ such that $H_i(x)^{-1}$ exists.
 In terms of operators, we have to consider differentiation $\Der$ mapping $y(x)$ to $y'(x)$ and integration $\Int_1$ resp.\ $\Int_2$ mapping $y(x)$ to $\int_{x_1}^xy(t)\,dt$ resp.\ $\int_{x_2}^xy(t)\,dt$. Based on the fundamental theorem of calculus, these operators satisfy the identities $\Der{\cdot}\Int_1=\id$ and $\Der{\cdot}\Int_2=\id$. Any function $F(x)$ induces a multiplication operator denoted by $F$, which maps $y(x)$ to $F(x)y(x)$. By the Leibniz rule, differentiation and multiplication operators satisfy the identity $\Der{\cdot}F=F{\cdot}\Der+F^\prime$. In particular, for the multiplication operators $H_i$ we obtain $\Der{\cdot}H_i=H_i{\cdot}\Der+B_i{\cdot}H_i$ since the functions $H_i(x)$ satisfy $H_i^\prime(x)=B_i(x)H_i(x)$. The operators corresponding to the differential equation and the solution formula are given by
 \[
  L:=\Der^2+A_1{\cdot}\Der+A_0 \quad\text{and}\quad S:=H_2{\cdot}\Int_2{\cdot}H_2^{-1}{\cdot}H_1{\cdot}\Int_1{\cdot}H_1^{-1}
 \]
 and we need to show that $L{\cdot}S=\id$ provided the decomposition $L = (\Der-B_1){\cdot}(\Der-B_2)$ holds. In $\mathbb{Z}\langle{a_0,a_1,b_1,b_2,d,h_1,\tilde{h}_1,h_2,\tilde{h}_2,i}\rangle$, we use $d$ and $i$ to represent differentiation and integration. Then, the properties of the operators are represented by the following polynomials.
 \begin{gather*}
  f_1 = d^2+a_1d+a_0-(d-b_1)(d-b_2), \quad\quad f_2 = di-1,\\
  f_3 = d h_1-h_1d-b_1h_1, \quad\quad f_4 = d h_2-h_2d-b_2h_2,\\
  f_5 = h_1\tilde{h}_1-1, \quad\quad f_6 = h_2\tilde{h}_2-1
 \end{gather*}
 The polynomial
 \[
  f = (d^2+a_1d+a_0)h_2i\tilde{h}_2h_1i\tilde{h}_1-1
 \]
 lies in the ideal $(f_1,f_2,f_3,f_4,f_5,f_6)$, as can be seen from the following representation.
 \begin{multline*}
  f = f_1h_2i\tilde{h}_2h_1i\tilde{h}_1 + (d-b_1)h_2f_2\tilde{h}_2h_1i\tilde{h}_1 + h_1f_2\tilde{h}_1\\
   + f_3i\tilde{h}_1 + (d-b_1)f_4i\tilde{h}_2h_1i\tilde{h}_1 + f_5 + (d-b_1)f_6h_1i\tilde{h}_1
 \end{multline*}
 Based on our main theorem, this ideal membership gives rise to several statements about actual functions and operators.
 We assume that the functions inducing the multiplication operators are infinitely differentiable, i.e.\ $A_0,A_1,B_1,B_2,H_1,H_1^{-1},H_2,H_2^{-1} \in C^\infty(I)$ for some interval $I \subseteq \mathbb{R}$. Since differentiation reduces the smoothness of functions, it is natural to consider the operators $\Der : C^{k+1}(I) \to C^{k}(I)$ for all $k \in \mathbb{N}$. Similarly, for integration we consider the operators $\Int_k : C^{k}(I) \to C^{k+1}(I)$ mapping $y(x)$ to $\int_{x_k}^xy(t)\,dt$ for some $x_k \in I$. This leads us to consider the following infinite quiver.
\begin{center}
\begin{tikzpicture}
\begin{scope}
 \matrix (m) [matrix of math nodes, column sep=3cm, row sep=1cm]
  {\cdots & \bullet & \bullet & \bullet\\
  a_0&\mbox{}&\mbox{}&\mbox{}\\};
 \path[->] (m-1-1) edge [bend left=45,looseness=1.3] node [above] {$a_1$} (m-1-2);
 \path[->] (m-1-2) edge [bend left=45,looseness=1.3] node [above] {$a_1$} (m-1-3);
 \path[->] (m-1-3) edge [bend left=45,looseness=1.3] node [above] {$a_1$} (m-1-4);
 \path[->] (m-1-1) edge [bend left=25] node [above] {$d$} (m-1-2);
 \path[->] (m-1-2) edge [bend left=25] node [above] {$d$} (m-1-3);
 \path[->] (m-1-3) edge [bend left=25] node [above] {$d$} (m-1-4);
 \path[->] (m-1-1) edge [bend left=15] node [below left] {$b_1$} (m-1-2);
 \path[->] (m-1-2) edge [bend left=15] node [below left] {$b_1$} (m-1-3);
 \path[->] (m-1-3) edge [bend left=15] node [below left] {$b_1$} (m-1-4);
 \path[->] (m-1-1) edge [bend right=15] node [above right] {$b_2$} (m-1-2);
 \path[->] (m-1-2) edge [bend right=15] node [above right] {$b_2$} (m-1-3);
 \path[->] (m-1-3) edge [bend right=15] node [above right] {$b_2$} (m-1-4);
 \path[->] (m-1-2) edge [bend left=25] node [below] {$i$} (m-1-1);
 \path[->] (m-1-3) edge [bend left=25] node [below] {$i$} (m-1-2);
 \path[->] (m-1-4) edge [bend left=25] node [below] {$i$} (m-1-3);
 \path[->] (m-2-1) edge [out=0,in=220] (m-1-2);
 \path[->] (m-1-1) edge [bend right=40] node [below] {$a_0$} (m-1-3);
 \path[->] (m-1-2) edge [bend right=40] node [below] {$a_0$} (m-1-4);
 \path[->] (m-1-2) edge [out=125,in=95,looseness=10] node [above] {$h_1$} (m-1-2);
 \path[->] (m-1-3) edge [out=125,in=95,looseness=10] node [above] {$h_1$} (m-1-3);
 \path[->] (m-1-4) edge [out=125,in=95,looseness=10] node [above] {$h_1$} (m-1-4);
 \path[->] (m-1-2) edge [out=85,in=55,looseness=10] node [above] {$h_2$} (m-1-2);
 \path[->] (m-1-3) edge [out=85,in=55,looseness=10] node [above] {$h_2$} (m-1-3);
 \path[->] (m-1-4) edge [out=85,in=55,looseness=10] node [above] {$h_2$} (m-1-4);
 \path[->] (m-1-2) edge [out=265,in=235,looseness=10] node [below] {$\tilde{h}_1$} (m-1-2);
 \path[->] (m-1-3) edge [out=265,in=235,looseness=10] node [below] {$\tilde{h}_1$} (m-1-3);
 \path[->] (m-1-4) edge [out=265,in=235,looseness=10] node [below] {$\tilde{h}_1$} (m-1-4);
 \path[->] (m-1-2) edge [out=305,in=275,looseness=10] node [below] {$\tilde{h}_2$} (m-1-2);
 \path[->] (m-1-3) edge [out=305,in=275,looseness=10] node [below] {$\tilde{h}_2$} (m-1-3);
 \path[->] (m-1-4) edge [out=305,in=275,looseness=10] node [below] {$\tilde{h}_2$} (m-1-4);
\end{scope}
\end{tikzpicture}
\end{center}
 Then, it can be checked that the polynomial $f$ and all the $f_i$ are uniformly compatible. Note that each compatible monomial has infinitely many signatures, which can be parameterized. For example, $f_2$ is uniformly compatible since $\sigma(di) = \{(v,v)\ |\ v \in V\} = \sigma(1)$.
 From right to left, we assign the spaces $C(I),C^1(I),C^2(I),\dots$ to the vertices. Since every realization of the $f_i$ is zero, Theorem~\ref{thm:Klinear} implies that also every realization of $f$ is zero. This means that for any $k \in \mathbb{N}$ and $r \in C^k(I)$ the function
 \[
  y(x) = H_2{\cdot}\Int_{k+1}{\cdot}H_2^{-1}{\cdot}H_1{\cdot}\Int_k{\cdot}H_1^{-1}r(x)
 \]
 in $C^{k+2}(I)$ satisfies the inhomogeneous differential equation $Ly(x)=r(x)$, if $H_i^\prime(x)=B_i(x)H_i(x)$.

 Alternatively, we can also consider the spaces $L^1_\mathrm{loc}(I),W^{1,1}_\mathrm{loc}(I),W^{2,1}_\mathrm{loc}(I),\dots$ with derivations $\Der : W^{k+1,1}_\mathrm{loc}(I) \to W^{k,1}_\mathrm{loc}(I)$ for all $k$ to obtain the same statement for $r \in W^{k,1}_\mathrm{loc}(I)$ and any $k$.
 Moreover, we can also consider the matrix case of all the above versions of the statement. For any $n \in \mathbb{N}^+$, let $A_0,A_1,B_1,B_2,H_1,H_1^{-1},H_2,H_2^{-1} \in C^\infty(I)^{n\times{n}}$ such that $H_i^\prime(x)=B_i(x)H_i(x)$. Then, by the computation with noncommutative polynomials above, Theorem~\ref{thm:Klinear} implies that the operator expression above for $y(x)$ indeed gives a solution vector of the system of differential equations $Ly(x)=r(x)$ with $r \in C^k(I)^n$, or $r \in W^{k,1}_\mathrm{loc}(I)^n$, if the system can be decomposed into the two first-order systems $y^\prime(x)-B_2(x)y(x)=z(x)$ and $z^\prime(x)-B_1(x)z(x)=r(x)$. To this end, the representations of the quivers just have to assign the spaces $C(I)^n,C^1(I)^n,C^2(I)^n,\dots$ or $L^1_\mathrm{loc}(I)^n,W^{1,1}_\mathrm{loc}(I)^n,W^{2,1}_\mathrm{loc}(I)^n,\dots$ to the vertices along with corresponding operators for the edges.
\end{example}

\subsection{Morphisms in linear categories}

Not all linear operators are necessarily linear maps on vector spaces. More generally, they can be understood as morphisms in $\R$-linear categories for some commutative ring $\R$ with unit element.
The above paradigm generalizes to this more abstract notion of linear operators in a straightforward manner. Nevertheless, we make the formal statements explicit.
Since we want to make the presentation self-contained, we briefly recall the basic definitions and terminology for categories first.

A category $\mathcal{C}$ consists of a class of objects and a class of morphisms between them, i.e.\ each morphism has a source object and a target object. Objects and morphisms in $\mathcal{C}$ are sometimes also referred to as $\mathcal{C}$-objects and $\mathcal{C}$-morphisms, respectively.
Note that the words \emph{object} and \emph{morphism} do not imply anything about the nature of the elements of these classes here. Intuitively, however, one can think of objects as sets and of morphisms as maps between those sets.
For any two objects $V,W$ in $\mathcal{C}$, the class of morphisms in $\mathcal{C}$ with source $V$ and target $W$ is denoted by $\Hom_{\mathcal{C}}(V,W)$. For any three objects $U,V,W$ in $\mathcal{C}$, any morphisms $\alpha \in \Hom_{\mathcal{C}}(U,V)$ and $\beta \in \Hom_{\mathcal{C}}(V,W)$ can be composed to yield a morphism $\beta{\cdot}\alpha \in \Hom_{\mathcal{C}}(U,W)$ and there exists a morphism $1_V \in \Hom_{\mathcal{C}}(V,V)$ such that, for all such $\alpha$ and $\beta$, we have $1_V{\cdot}\alpha=\alpha$ and $\beta{\cdot}1_V=\beta$. Furthermore, composition of morphisms is associative.
A category $\mathcal{C}$ is called \emph{$\R$-linear} if, for any two objects $V,W$ in $\mathcal{C}$, $\Hom_{\mathcal{C}}(V,W)$ is an $\R$-module and, for any three objects $U,V,W$ of $\mathcal{C}$, composing morphisms from $\Hom_{\mathcal{C}}(U,V)$ and $\Hom_{\mathcal{C}}(V,W)$ is $\R$-bilinear.
In particular, $\mathbb{Z}$-linear categories are usually referred to as pre-additive categories.

For example, any category $\mathcal{C}$ whose objects are abelian groups and whose morphisms are homomorphisms of those groups is pre-additive, if every $\Hom_{\mathcal{C}}(G,H)$ contains the zero map and is closed under pointwise addition and pointwise negation of maps. Also, every (associative but not necessarily commutative) $\R$-algebra with unit element can be viewed as an $\R$-linear category, where the class of objects has only one element and the class of morphisms is the set of elements of the algebra.

In addition to the category of $K$-vector spaces, representations of quivers have also been considered for general categories $\mathcal{C}$, see e.g.\ \cite{GothenKing2005}. A \emph{representation} $\mathcal{R}=(\mathcal{V},\varphi)$ of a quiver $(V,E,s,t)$ in $\mathcal{C}$ assigns to each vertex $v \in V$ of the quiver an object $\mathcal{V}_v$ of the category $\mathcal{C}$ and to each edge $e \in E$ a morphism $\varphi(e) \in \Hom_{\mathcal{C}}(\mathcal{V}_{s(e)},\mathcal{V}_{t(e)})$.

Like before, we call a representation of a labelled quiver \emph{consistent} with the labelling, if each monomial can be realized in at most one way as a morphism, once source and target of the underlying path is fixed.
Consistent representations make the maps $\varphi_{v,w}$ defined below well-defined.
Note that it is still allowed to have several realizations of the same monomial in one $\Hom_{\mathcal{C}}(\mathcal{V}_v,\mathcal{V}_w)$, if the same objects $\mathcal{V}_v$ or $\mathcal{V}_w$ are assigned to more than one vertex and there are paths from/to these vertices with the same label.
\begin{definition}
 Let $\mathcal{C}$ be a category and let $Q$ be a labelled quiver with labelling $l$. We call a representation $(\mathcal{V},\varphi)$ of $Q$ \emph{consistent} with the labelling $l$ if for any two nonempty paths $p=e_n{\dots}e_1$ and $q=d_n{\dots}d_1$ in $Q$ with the same source and target, equality of labels $l(p)=l(q)$ implies $\varphi(e_n){\cdot}{\dots}{\cdot}\varphi(e_1) = \varphi(d_n){\cdot}{\dots}{\cdot}\varphi(d_1)$ as morphisms in $\mathcal{C}$.
\end{definition}

For the following definition and lemma, we fix a commutative ring $\R$ with unit element, an $\R$-linear category $\mathcal{C}$, a labelled quiver $Q=(V,E,X,s,t,l)$, and a consistent representation $\mathcal{R}=(\mathcal{V},\varphi)$ of $Q$ in $\mathcal{C}$.
We now show how computations with polynomials in $\R\langle{X}\rangle$ can be interpreted in terms of computations with morphisms in $\mathcal{C}$.

\begin{definition}
 For $v,w \in V$, we define the $\R$-linear map $\varphi_{v,w} : \R\langle{X}\rangle_{v,w} \to \Hom_{\mathcal{C}}(\mathcal{V}_v,\mathcal{V}_w)$ by
 \[
  \varphi_{v,w}(l(e_n{\dots}e_1)) := \varphi(e_n){\cdot}{\dots}{\cdot}\varphi(e_1)
 \]
 for all nonempty paths $e_n{\dots}e_1$ in $Q$ from $v$ to $w$ and, if $v=w$, also by $\varphi_{v,v}(1) := 1_{\mathcal{V}_v}$.
 For all $f \in \R\langle{X}\rangle_{v,w}$, we call the morphism $\varphi_{v,w}(f)$ a \emph{realization} of $f$ w.r.t.\ the representation $\mathcal{R}$ of $Q$.
\end{definition}

The following lemma and theorem generalize Lemma~\ref{lem:productrealizationsK} and Theorem~\ref{thm:Klinear} to $\R$-linear categories. Since composition of morphisms in $\R$-linear categories is $\R$-bilinear, the proofs are completely analogous and hence we omit them.

\begin{lemma}\label{lem:productrealizations}
 Let $u,v,w \in V$. Then, for all $f \in \R\langle{X}\rangle_{v,w}$ and $g \in \R\langle{X}\rangle_{u,v}$, we have that $fg \in \R\langle{X}\rangle_{u,w}$ and
 \[
  \varphi_{u,w}(fg) = \varphi_{v,w}(f){\cdot}\varphi_{u,v}(g).
 \]
\end{lemma}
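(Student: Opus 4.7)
The plan is to mirror the proof of Lemma~\ref{lem:productrealizationsK} essentially verbatim, exploiting the fact that composition of morphisms in an $\R$-linear category is $\R$-bilinear, exactly as composition of $K$-linear maps is $K$-bilinear. The map $\varphi_{u,w}$ is $\R$-linear by definition, and so are $\varphi_{v,w}$ and $\varphi_{u,v}$; combined with $\R$-bilinearity of composition in $\mathcal{C}$, this reduces the claim to the case where $f$ and $g$ are single monomials lying in $\R\langle{X}\rangle_{v,w}$ and $\R\langle{X}\rangle_{u,v}$, respectively.

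Assuming now that $f$ and $g$ are monomials, there exists a path $p = e_n{\dots}e_{k+1}$ in $Q$ from $v$ to $w$ with $l(p) = f$, and a path $q = e_k{\dots}e_1$ from $u$ to $v$ with $l(q) = g$. Their concatenation $pq$ is a path in $Q$ from $u$ to $w$ whose label is $fg$, which establishes $fg \in \R\langle{X}\rangle_{u,w}$. Applying the defining formula for $\varphi_{u,w}$ to the path $pq$ together with associativity of composition in $\mathcal{C}$ yields
\[
 \varphi_{u,w}(fg) = \varphi(e_n){\cdot}{\dots}{\cdot}\varphi(e_{k+1}){\cdot}\varphi(e_k){\cdot}{\dots}{\cdot}\varphi(e_1) = \varphi_{v,w}(f){\cdot}\varphi_{u,v}(g).
\]
The degenerate case where $p$ or $q$ is the empty path $\epsilon_v$ is handled by the identity axioms $1_{\mathcal{V}_v}{\cdot}\alpha = \alpha$ and $\beta{\cdot}1_{\mathcal{V}_v} = \beta$, which are part of the definition of a category, so that $\varphi_{v,v}(1) = 1_{\mathcal{V}_v}$ behaves neutrally on both sides.

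The only point requiring care, and arguably the main conceptual obstacle, is well-definedness: a monomial in $\R\langle{X}\rangle_{u,w}$ may in general be the label of several paths from $u$ to $w$, and one has to know that $\varphi_{u,w}$ assigns the same morphism regardless of which decomposition of the monomial is chosen. This, however, is exactly what consistency of the representation $\mathcal{R}$ with the labelling guarantees, and it was already used to ensure that $\varphi_{v,w}$, $\varphi_{u,v}$, and $\varphi_{u,w}$ are well-defined in the first place. Hence no further argument is needed, and the proof closes.
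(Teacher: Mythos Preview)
Your proof is correct and follows exactly the approach the paper intends: the paper explicitly omits the proof of this lemma, stating that it is completely analogous to that of Lemma~\ref{lem:productrealizationsK} once one uses $\R$-bilinearity of composition in an $\R$-linear category in place of $K$-bilinearity of composition of linear maps. Your reduction to monomials, the concatenation-of-paths argument, and the handling of the empty-path case all match the proof of Lemma~\ref{lem:productrealizationsK} verbatim, and your closing remark on consistency is a harmless reminder of why $\varphi_{u,w}$ is well-defined.
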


\begin{theorem}\label{thm:Rlinear}
 Let $\R$ be a commutative ring with unit element and let $F \subseteq \R\langle{X}\rangle$ and $f \in (F)$. Moreover, let $Q=(V,E,X,s,t,l)$ be a labelled quiver such that $f$ is compatible and all elements of $F$ are uniformly compatible with $Q$. Then, for all $\R$-linear categories $\mathcal{C}$ and all consistent representations $\mathcal{R}=(\mathcal{V},\varphi)$ of $Q$ in $\mathcal{C}$ such that every realization of any element of $F$ w.r.t.\ $\mathcal{R}$ is zero, we have that every realization of $f$ w.r.t.\ $\mathcal{R}$ is zero.
\end{theorem}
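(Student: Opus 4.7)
The plan is to mimic the proof of Theorem~\ref{thm:Klinear} word for word, with $L(\mathcal{V}_v,\mathcal{V}_w)$ replaced by $\Hom_{\mathcal{C}}(\mathcal{V}_v,\mathcal{V}_w)$, using Lemma~\ref{lem:productrealizations} in place of Lemma~\ref{lem:productrealizationsK}. The only place where the specific structure of vector spaces entered the previous argument was in (a) the $\R$-linearity of $\varphi_{v,w}$ on $\R\langle{X}\rangle_{v,w}$ and (b) the fact that composing a zero morphism with any other morphism yields zero, both on the left and on the right. Both of these hold in any $\R$-linear category because the composition map $\Hom_{\mathcal{C}}(\mathcal{V}_{v_i},\mathcal{V}_{w_i}) \times \Hom_{\mathcal{C}}(\mathcal{V}_v,\mathcal{V}_{v_i}) \to \Hom_{\mathcal{C}}(\mathcal{V}_v,\mathcal{V}_{w_i})$ (and analogously the postcomposition) is $\R$-bilinear by definition, so it vanishes as soon as one argument is the zero element of the corresponding $\R$-module.

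First, I would dispose of the trivial case $f=0$. Then, for a given realization $\varphi_{v,w}(f)$, I would use the hypothesis that $f$ is compatible with $Q$ and the fact that all elements of $F$ are uniformly compatible (so $F \subseteq \R\langle{X}\rangle_Q$) to invoke Theorem~\ref{thm:Qconsequences}. This gives a finite representation
\[
 f = \sum_{i=1}^n a_i f_i b_i, \qquad a_i,b_i \in \R\langle{X}\rangle_Q,\ f_i \in F,
\]
with $\sigma(a_i f_i b_i) \supseteq \sigma(f)$ for every $i$. Discarding zero summands, each $a_if_ib_i$ is uniformly compatible and lies in $\R\langle{X}\rangle_{v,w}$.

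Next, I would decompose the signature of each summand. Applying Lemma~\ref{lem:multiplication} to the product $a_if_i$ (both factors being uniformly compatible) yields a uniformly compatible product; then applying Lemma~\ref{lem:multiplication} together with Remark~\ref{rem:factorization} twice to $(a_if_i)b_i$ produces vertices $v_i,w_i \in V$ with $a_i \in \R\langle{X}\rangle_{w_i,w}$, $f_i \in \R\langle{X}\rangle_{v_i,w_i}$, and $b_i \in \R\langle{X}\rangle_{v,v_i}$. Lemma~\ref{lem:productrealizations} then gives
\[
 \varphi_{v,w}(a_i f_i b_i) = \varphi_{w_i,w}(a_i)\cdot\varphi_{v_i,w_i}(f_i)\cdot\varphi_{v,v_i}(b_i).
\]
Summing over $i$ using the $\R$-linearity of $\varphi_{v,w}$, and using that $\varphi_{v_i,w_i}(f_i)=0$ for every $i$ by hypothesis, $\R$-bilinearity of composition then forces each term on the right to be the zero morphism, hence $\varphi_{v,w}(f)=0$.

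There is no real obstacle here; the entire content of the generalization lies in having already packaged the needed facts into Theorem~\ref{thm:Qconsequences} and Lemma~\ref{lem:productrealizations}, and in the observation that $\R$-bilinearity of composition is exactly the structural hypothesis that made the original argument work in the $K$-linear setting. The mildly delicate bookkeeping step is ensuring the signature decomposition $a_i \in \R\langle{X}\rangle_{w_i,w}$, $f_i \in \R\langle{X}\rangle_{v_i,w_i}$, $b_i \in \R\langle{X}\rangle_{v,v_i}$ is legitimate: this requires the summands to be nonzero (so that Remark~\ref{rem:factorization} applies) and the intermediate product $a_if_i$ to be uniformly compatible, which is why Lemma~\ref{lem:multiplication} must be invoked before Remark~\ref{rem:factorization} rather than after.
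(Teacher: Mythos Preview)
Your proposal is correct and matches the paper's own treatment: the paper explicitly omits the proof of Theorem~\ref{thm:Rlinear}, noting that since composition of morphisms in $\R$-linear categories is $\R$-bilinear, the argument is completely analogous to that of Theorem~\ref{thm:Klinear} with Lemma~\ref{lem:productrealizations} replacing Lemma~\ref{lem:productrealizationsK}. Your write-up faithfully reproduces that analogous argument, including the bookkeeping step of discarding zero summands before invoking Remark~\ref{rem:factorization}.
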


\begin{example}
 Analogously to Example~\ref{ex:theorem}, based on the above theorem, the following very general version of the statement mentioned in the introduction can be proven: for any $\R$-linear category $\mathcal{C}$, given a morphism $A \in \Hom_\mathcal{C}(\mathcal{V}_v,\mathcal{V}_w)$ and an inner inverse $A^- \in \Hom_\mathcal{C}(\mathcal{V}_w,\mathcal{V}_v)$, then also $A^-+Y-YAA^-$ is an inner inverse of $A$ for any morphism $Y \in \Hom_\mathcal{C}(\mathcal{V}_w,\mathcal{V}_v)$. In particular, it holds for matrices with entries in $\R$ and for $\R$-module homomorphisms.
 Likewise, using the above theorem, more general versions of the statement mentioned in Example~\ref{ex:WoodburyInner} follow from the ideal membership and compatibility checked there just by changing the representation of the quiver. For instance, a version for $\R$-modules $\mathcal{V}_1,\mathcal{V}_2,\mathcal{V}_3,\mathcal{V}_4$ and $\R$-module homomorpisms $A,A^-,B,S^-,Y,Z$ between them follows.
\end{example}

\section{Conclusion}

For proving operator identities, the framework developed in this paper allows to do a single formal computation in the free algebra $\R\langle{X}\rangle$ with symbols instead of operators. This computation does not have to respect the restrictions imposed by the domains and codomains of operators. Based on ideal membership of noncommutative polynomials, the main theorem immediately implies corresponding operator identities, as long as the polynomials corresponding to assumptions and claims have realizations as operators.
Moreover, by choosing different quivers and representations in different $\R$-linear categories, one ideal membership rigorously proves analogous identities for various settings. In particular, if the polynomials describing the assumptions and claims involve only integer coefficients and ideal membership holds in $\mathbb{Z}\langle{X}\rangle$, like in all examples of this paper, then any $\R$-linear category can be used since the ideal membership then also holds in any $\R\langle{X}\rangle$.

While the computations needed for showing ideal membership in the illustrative examples contained in this paper are rather short and could still be done by hand, in general, the representations of polynomials in terms of the generators of the ideal can be very involved and hard to find.
For instance, in our joint work in progress with Dragana Cvetkovi\'c-Ili\'c and her research group on algebraic proofs for generalized inverses \cite{CvetkovicWei2017}, we observed representations of several hundred terms for the proof of some ideal memberships. Using our framework, checking compatibility of each step in such long computations is not necessary and checking compatibility of polynomials corresponding to assumptions and claims is almost immediate.

In contrast to allowing unrestricted computations with noncommutative polynomials, like in the present paper, we are currently considering certain restrictions on the computations in $\R\langle{X}\rangle$ in order to relax the uniform compatibility imposed on the polynomials corresponding to the assumptions. These results will be part of a future publication.
Another direction to generalize the framework presented here aims at including additional computational steps related to factorization of polynomials, beyond ideal membership. Such steps can be used to model injectivity and surjectivity of operators, for instance.

\section*{Acknowledgements}

The results of this paper were motivated by discussions in the framework of the OeAD project SRB 05/2016 {\lq\lq}Generalized inverses, symbolic computations and operator algebras{\rq\rq} with Dragana Cvetkovi\'c-Ili\'c, Anja Korporal, Jovana Milo\v{s}evi\'c, Marko Petkovi\'c, and Milan Tasi\'c.
The authors were supported by the Austrian Science Fund (FWF): P~27229, P~31952, and P~32301.
The authors would like to thank Cyrille Chenavier, Dragana Cvetkovi\'c-Ili\'c, Clemens Hofstadler, and Markus Passenbrunner for feedback on earlier versions of the manuscript.


\begin{thebibliography}{AAA99}
\bibitem[Ber78]{Bergman1978}
 George~M.~Bergman,
 \emph{The diamond lemma for ring theory},
 Adv. in Math. 29, pp.~178--218, 1978.

\bibitem[BCL12]{BokutChenLi2012}
 L.~A.~Bokut, Yuqun~Chen, and Yu~Li,
 \emph{Gr\"obner-Shirshov Bases for Categories},
 In \emph{Operads and universal algebra}, pp.~1--23, World Sci. Publ., Hackensack, 2012.

\bibitem[Buc65]{Buchberger1965}
 Bruno~Buchberger,
 \emph{Ein Algorithmus zum Auffinden der Basiselemente des Restklassenrings nach einem nulldimensionalen Polynomideal},
 Ph.D. thesis, University of Innsbruck, 1965.
 English translation in J. Symbolic Comput. 134, pp.~475--511, 2006.

\bibitem[CW17]{CvetkovicWei2017}
 Dragana~S.~Cvetkovi\'c~Ili\'c and Yimin~Wei,
 \emph{Algebraic Properties of Generalized Inverses},
 Springer, Singapore, 2017.

\bibitem[DW05]{DerksenWeyman2005}
 Harm~Derksen and Jerzy~Weyman,
 \emph{Quiver Representations},
 Notices Amer. Math. Soc. 52, pp.~200--206, 2005.

\bibitem[GK05]{GothenKing2005}
 Peter~B.~Gothen and Alastair~D.~King,
 \emph{Homological algebra of twisted quiver bundles},
 J. London Math. Soc. 71, pp.~85--99, 2005.

\bibitem[GHM19]{GuiraudHoffbeckMalbos2019}
 Yves~Guiraud, Eric~Hoffbeck, and Philippe~Malbos,
 \emph{Convergent presentations and polygraphic resolutions of associative algebras},
 Math. Z.~293, pp.~113--179, 2019.

\bibitem[HS99]{HeltonStankus1999}
 J.~William~Helton and Mark~Stankus,
 \emph{Computer assistance for ``discovering'' formulas in system engineering and operator theory},
 J. Funct. Anal. 161, pp.~289--363, 1999.

\bibitem[HSW98]{HeltonStankusWavrik1998}
 J.~William~Helton, Mark~Stankus, and John~J.~Wavrik,
 \emph{Computer simplification of formulas in linear systems theory},
 IEEE Trans. Automat. Control 43, pp.~302--314, 1998.

\bibitem[HW94]{HeltonWavrik1994}
 J.~William~Helton and John~J.~Wavrik,
 \emph{Rules for computer simplification of the formulas in operator model theory and linear systems},
 In \emph{Nonselfadjoint operators and related topics}, pp.~325--354, Birkh\"auser, Basel, 1994.

\bibitem[HS86]{HendersonSearle1981}
 Harold~V.~Henderson and Shayle~R.~Searle,
 \emph{On deriving the inverse of a sum of matrices},
 SIAM Rev. 31, pp.~53--60, 1981.

\bibitem[HRR19]{HofstadlerRaabRegensburger2019}
 Clemens~Hofstadler, Clemens~G.~Raab, and Georg~Regensburger,
 \emph{Certifying operator identities via noncommutative Gr\"{o}bner bases},
 ACM Communications in Computer Algebra 53, 2019. To appear

\bibitem[Kro01]{Kronewitter2001}
 F.~Dell~Kronewitter,
 \emph{Using noncommutative Gr\"obner bases in solving partially prescribed matrix inverse completion problems},
 Linear Algebra Appl. 338, pp.~171--199, 2001.

\bibitem[LS19]{LevandovskyySchmitz2019}
 Viktor~Levandovskyy and Leonard~Schmitz,
 \emph{Algorithmic algebraic proofs of identities between not only matrices},
 2019. In preparation

\bibitem[MZ98]{MikhalevZolotykh1998}
 Alexander~A.~Mikhalev and Andrej~A.~Zolotykh,
 \emph{Standard Gr\"obner-Shirshov bases of free algebras over rings, I. Free associative algebras},
 Internat. J. Algebra Comput. 8, pp.~689--726, 1998.

\bibitem[Mor94]{Mora1994}
 Teo~Mora,
 \emph{An introduction to commutative and noncommutative {G}r\"obner bases},
 Theoret. Comput. Sci. 134, pp.~131--173, 1994.

\bibitem[Mor16]{Mora2016}
 Teo~Mora,
 \emph{Solving polynomial equation systems. Vol. IV. Buchberger theory and beyond},
 Cambridge University Press, Cambridge, 2016.

\end{thebibliography}
\end{document}